\documentclass{elsarticle}

\usepackage{graphicx} 
\usepackage{xcolor}
\usepackage{tikz}
\usepackage{amsmath}
\usepackage{mathtools}

\usepackage[active]{srcltx}
\usepackage{amssymb}
\usepackage{amsthm}
\usepackage{latexsym}
\usepackage{color}
\usepackage{dsfont}
\usepackage[normalem]{ulem}
\usepackage{epsfig}
\usepackage{verbatim}
\usepackage{enumerate}
\usepackage{float}
\usepackage{url}

\numberwithin{equation}{section}

\newtheorem{theorem}{Theorem}[section]

\newtheorem{lemma}[theorem]{Lemma}
\newtheorem{proposition}[theorem]{Proposition}
\newtheorem{definition}[theorem]{Definition}
\newtheorem{corollary}[theorem]{Corollary}
\newtheorem{remark}[theorem]{Remark}

\begin{document}

\begin{frontmatter}

\title{A method of moving planes for a mixed local-nonlocal parabolic system with quenching}

\author[1]{Sergio Junquera\corref{cor}}

\ead{sejunque@ucm.es}

\affiliation[1]{organization={Departamento de Análisis Matemático y Matemática Aplicada, Universidad Complutense de Madrid},
			addressline={Plaza de Ciencias 3},
            city={Madrid},
            postcode={28040}, 
            state={Madrid},
            country={Spain}}

\author[2,3]{Leandro M. Del Pezzo}

\ead{leandro.delpezzo@fcea.edu.uy}

\affiliation[2]{organization={Instituto de Estadística (IESTA), Departamento de Métodos Cuantitativos, Facultad de Ciencias Económicas y de Administración, Universidad de la República},
            addressline={Av. Gonzalo Ramírez 1926}, 
            city={Montevideo},
            postcode={11200}, 
            state={Montevideo},
            country={Uruguay}}

\affiliation[3]{organization={PEDECIBA Matemática},
            addressline={Iguá 4225}, 
            city={Montevideo},
            postcode={11400}, 
            state={Montevideo},
            country={Uruguay}}

\cortext[cor]{Corresponding author}

\begin{abstract}
In this paper we develop a parabolic version of the method of moving planes on systems with a mixed local and nonlocal diffusion operator, that being $\mathcal{L} = -(-\Delta)^s + \Delta$, where $(-\Delta)^s$ is the integral fractional laplacian. We then use this result to study the quenching phenomena occurring in said systems with a singular absorption term of the type $(1-u)^{-p}$. We prove that there exists a range of multiplicative parameters for which every solution presents quenching, and that for any initial datum $u_0$, there exists a small enough multiplicative parameter such as the solution is globally defined. We also prove that the quenching phenomena cannot happen arbitrarily close to the boundary, but they are contained in a compact subset of the domain.
\end{abstract}

\begin{keyword}
    Nonlocal diffusion \sep Fractional laplacian \sep Mixed operator \sep Moving planes \sep Quenching.
\end{keyword}

\end{frontmatter}

\section{Introduction}

In this paper we will study the following nonlinear nonlocal parabolic dynamical system with  Dirichlet boundary conditions:

 \begin{equation} \label{1.1}
    \left\{
    \begin{array}{ll}
        \displaystyle u_t(x,t)= \mathcal{L} u(x,t)+\lambda (1-u)^{-p}(x,t), & (x,t)\in\Omega\times(0,T), \\
        u(x,0)=u_0(x), & x\in\Omega,\\
        u(x,t) = 0, & (x,t)\in\mathbb{R}^n \setminus \Omega \times [0,T).
    \end{array}
    \right.
    \end{equation}
    We consider that $T\in(0,\infty]$ is the maximal existence time of the solution, $\Omega \subset \mathbb{R}^n$ is an open bounded connected domain that is $C^{2,\gamma}$, the initial data $u_0$ is a function in $H_0^1(\Omega)$ such that $1>u_0(x)\geq 0$ for every $x\in\Omega$, and the parameters $\lambda,p>0$. The diffusion operator $\mathcal{L}$ is a combination of a nonlocal operator (the fractional laplacian) and a local operator (the classical laplacian),
    \begin{equation*}
    	\mathcal{L} = -(-\Delta)^s + \Delta,
    \end{equation*}
    with $s\in(0,1)$ and
    \begin{equation} \label{eq.fraclap}
    	(-\Delta)^s \psi(x) = \frac{C_{n,s}}{2} \text{P.V.} \int_{\mathbb{R}^n} \frac{\psi(x)-\psi(y)}{|x-y|^{n+2s}} dy,
    \end{equation}
    where $C_{n,s}$ is a normalization constant that depends on $n$ and $s$ and P.V. denotes the principal value integral. This principal value is used to avoid the singularity of the integrand at the diagonal $x=y$, and is defined as
	\begin{equation*}
		\text{P.V.} \int_{\mathbb{R}^n} F(x,y) dy = \lim_{\varepsilon\to 0^+} \int_{\mathbb{R}^n \setminus B_\varepsilon (x)} F(x,y) dy.
	\end{equation*}   
     This kind of operators generate high mathematical interest due to the combination of local and nonlocal mechanisms, as well as due to the lack of scale invariance, see for example \cite{AC2021,BDVV2023,SVWZ2025}. They are also highly interesting operators from an application viewpoint, being able to model diffusion phenomena with different time scales and bimodal power-law distribution processes, see \cite{PV2021}, among others.
    
    In this paper we will work with Dirichlet boundary conditions, which must be imposed in the whole complement of the domain, $\mathbb{R}^n \setminus \Omega$. This is natural in the field of nonlocal operators, since their nature forces us to establish the value of the solution in the whole space $\mathbb{R}^n$, see \cite{AMRT2010, BFRW1997, BV2016, F2003}. In our case, this is needed so that expression \eqref{eq.fraclap} makes sense.
    
    Notice that the equation in our system has a singular absorption term. Therefore, the solutions should increase at some point of $\Omega$ and they could eventually reach the value $1$ there. If this happens at a finite time $t=T_1$, the absorption term blows up and the classical solution no longer exists, then $T=T_1$. We say in this situation that quenching happens and $T$ is the quenching time. More precisely, we say that a solution of system \eqref{1.1} given by $u$ presents quenching in finite time $T$ if
    \begin{equation*}
    	\limsup_{t\nearrow T} \|u(\cdot,t)\|_{L^\infty(\Omega)} = 1.
    \end{equation*}
    We will often refer to a solution that presents quenching as a quenching solution or just say that the solution quenches. If we work with solutions of \eqref{1.1} that present enough regularity, we can also define their quenching set as
    \begin{equation*}
    	Q(u) = \{x\in\overline\Omega : \exists \, t_n \nearrow T, x_n \to x \text{ such that } u(x_n,t_n) \nearrow 1\},
    \end{equation*}
    which contains the points in space in which the phenomenon of quenching happens.
    
    The phenomenon of quenching appears naturally in physical models such as the nonlinear heat conduction in solid hydrogen, see \cite{R1979}, or the Arrhenius Law in combustion theory, see \cite{CK1995}. Quenching was studied for the first time in \cite{K1974} for the problem
    \begin{equation*}
    	v_t = v_{xx} + (1-v)^{-1},
    \end{equation*}
    where quenching happens when $v$ reaches the value $v=1$, the same as in our system. Since then, the phenomenon of quenching for different problems has been the issue of intensive study for local diffusion operators, see for example the surveys \cite{C1996,FL1993,L1985,L1989,L1993} for a single equation, or \cite{FdQR2006,JZZ2013,dQR2002,ZW2008} for systems.
    
    The problem of quenching with nonlocal diffusion has been less studied. In \cite{F2011,ZMZ2011}, the authors study the system with a single equation and a nonlocal diffusion operator with a smooth kernel under Dirichlet and Neumann conditions respectively. In a similar fashion, \cite{AFJ2025,J2025} extend the results to a system of two equations with coupled absorption terms under the same nonlocal diffusion operator, studying additional matters such as the difference between simultaneous and nonsimultaneous quenching.
    
    To study the phenomenon of quenching, some of the most important techniques rely heavily on the maximum principle and comparison results. While in the local framework these need hypothesis over the boundary of our domain $\Omega$, it is well known that as soon as we move onto a nonlocal context, these hypothesis turn much more restrictive, needing assumptions over the whole $\mathbb{R}^n \setminus \Omega$, see for example \cite{AMRT2010}.
    
    These assumptions may take the form of, for example, asking the solution to be positive in the whole complement $\mathbb{R}^n \setminus \Omega$. This is sometimes too restrictive, and needs to be somewhat relaxed. One way to do so is to take advantage of the symmetry invariance of our operator, in a way that is traditionally called the method of moving planes.
    
    The method of moving planes has been traditionally studied in the elliptic local case in papers such as \cite{GNN1979, S1971}, and more recently these results have been adapted to the elliptic nonlocal case, see \cite{CLL2016}.
    
    The aim of this article is twofold. On the one hand, we are interested in extending the method of moving planes to the parabolic case with a nonlocal diffusion operator of the form $\mathcal{L}$. On the other hand, we strive to study properties of the solutions of system \eqref{1.1}, such as the presence of quenching in finite time, globally defined solutions, and quenching at points of the boundary.

    \subsection{Main results}
    
    Our first objective is to extend the method of moving planes presented in \cite{CLL2016} for a parabolic system involving our mixed operator $\mathcal{L}$, which will be necessary to prove later results for which the classical maximum principle is too restrictive.
    
    \begin{theorem} \label{teo.movingplanes}
	Let $\Psi$ be a hyperplane in $\mathbb{R}^n$, $H$ one of the two half-spaces determined by said hyperplane and, for any $x\in H$, let $\hat{x}$ be the reflection of $x$ with respect to $\Psi$. Let $\tau>0$, $D$ be a bounded domain contained in $H$ and suppose that $u$ is a function in $\mathcal{C}([\tau,T), L^1_{2s} (\mathbb{R}^n))$ such that $u(\cdot,t) \in C^{2,\alpha}_{loc}(D)$ for every $t\in[\tau,T)$ and $u_t \in C^\beta (D\times[\tau,T))$. Assume that $u$ satisfies
    \begin{equation} \label{cond.movingplanes}
    \left\{
    \begin{array}{ll}
        \displaystyle u_t(x,t) - \mathcal{L} u(x,t)-c(x,t)u(x,t)\geq 0, & (x,t)\in D\times(\tau,T), \\
        u(x,\tau)\geq 0, & x\in D, \\
        u(x,t)\geq 0, & (x,t)\in (H\setminus D)\times[\tau,T),\\
        u(x,t) = -u(\hat{x},t), & (x,t)\in H \times [\tau,T),
    \end{array}
    \right.
    \end{equation}
    where there exists some $C, c_0 \in L^\infty_{loc}([\tau,T))$ such that, for every $t\in[\tau,T)$, 
    $c_0(t)\le c(x,t)\leq C(t)$ uniformly in $D$.

    Then we have that $u(x,t) \geq 0$ for every $(x,t)\in D \times[\tau,T)$ and, if $u(x_1,t_1) = 0$ at some point $(x_1,t_1) \in D \times [\tau,T)$, then $u\equiv 0$ almost everywhere in $D \times[\tau,t_1]$.
\end{theorem}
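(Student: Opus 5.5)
We argue along the lines of the antisymmetric maximum principle of \cite{CLL2016}, adapted to the parabolic mixed setting. First we remove the sign of $c$ by an exponential change of variables. Fix $T'<T$ and let $M=\|C\|_{L^\infty(\tau,T')}+1$. Set $w(x,t)=e^{-Mt}u(x,t)$. Then $w$ inherits the antisymmetry $w(x,t)=-w(\hat x,t)$, the sign conditions on $D$ at $t=\tau$ and on $H\setminus D$. It satisfies
\begin{equation*}
w_t-\mathcal{L}w+(M-c)w\geq 0\quad\text{in } D\times(\tau,T'),
\end{equation*}
with $M-c\geq 1>0$. It suffices to prove $w\geq 0$ in $D\times[\tau,T']$ for every $T'<T$.

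\textbf{Nonnegativity.} Suppose $\inf_{D\times[\tau,T']}w<0$. Since $D$ is bounded, $w\geq0$ on $H\setminus D$, and $w$ is continuous in time with values in $L^1_{2s}$ and regular in $D$, the negative infimum is attained at some $(x_0,t_0)$ with $x_0\in D$ and $t_0>\tau$. Attainment needs care: we use continuity of $w$ on $\overline D$, which follows because $w\ge0$ on $\partial D\subset H\setminus D$ by lower semicontinuity; if regularity only holds locally, we first work on slightly smaller sets. At this minimum point we have $w_t(x_0,t_0)\le0$ and $\Delta w(x_0,t_0)\ge0$.

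The key computation is the nonlocal term. We split $\mathbb{R}^n=H\cup \hat H$ and use antisymmetry, with $y$ ranging over $H$:
\begin{equation*}
(-\Delta)^s w(x_0,t_0)=C_{n,s}\,\mathrm{P.V.}\!\int_H\Big[\frac{w(x_0)-w(y)}{|x_0-y|^{n+2s}}+\frac{w(x_0)+w(y)}{|x_0-\hat y|^{n+2s}}\Big]dy .
\end{equation*}
Since $|x_0-y|<|x_0-\hat y|$ for $x_0,y\in H$ and $w(y)\ge w(x_0)$ on $H$, we get
\begin{equation*}
(-\Delta)^s w(x_0,t_0)\le C_{n,s}\int_H 2w(x_0,t_0)\,|x_0-\hat y|^{-n-2s}\,dy<0 .
\end{equation*}
Hence $\mathcal{L}w(x_0,t_0)>0$, and
\begin{equation*}
w_t-\mathcal{L}w+(M-c)w<0
\end{equation*}
at $(x_0,t_0)$, a contradiction. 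Thus $u\ge0$.

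\textbf{Strong part.} Suppose $u(x_1,t_1)=0$ with $x_1\in D$. Then $w$ attains its minimum $0$ there. If $t_1=\tau$ there is nothing to evolve, and we interpret the claim on $D\times\{\tau\}$. Otherwise $w_t(x_1,t_1)\le 0$ and $\Delta w(x_1,t_1)\ge 0$, and the same splitting gives
\begin{equation*}
(-\Delta)^s w(x_1,t_1)=-C_{n,s}\int_H w(y,t_1)\big(|x_1-y|^{-n-2s}-|x_1-\hat y|^{-n-2s}\big)dy\le 0,
\end{equation*}
with strict inequality unless $w(\cdot,t_1)\equiv0$ a.e.\ in $H$. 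Strictness would again contradict the inequation, so $w(\cdot,t_1)=0$ a.e.\ in $H$, and in particular in $D$. To propagate backwards to $t\in(\tau,t_1)$, we use the local strong maximum principle for $w_t-\Delta w+(M-c)w\ge (-\Delta)^s w$. On the set where $w\ge0$ the right side is an inhomogeneity of unfavorable sign, so instead we apply the parabolic strong maximum principle to $u$ as a supersolution of $w_t-\Delta w+\tilde c\,w\ge -(-\Delta)^s w$. There we bound $-(-\Delta)^s w\ge0$ only at points where $w$ vanishes.

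The cleaner route is this: the set $Z=\{t\in[\tau,t_1]:w(\cdot,t)\equiv0 \text{ in } D\}$ is closed and nonempty. By the classical parabolic strong maximum principle (Nirenberg) applied to the local part, any interior zero at time $t$ forces zeros on $D\times[\tau',t]$ locally, and the nonlocal computation upgrades each zero to vanishing on all of $H$. Hence $Z=[\tau,t_1]$. This backward propagation step, which combines the local Harnack/strong maximum principle with the nonlocal rigidity and avoids a circular use of signs, is the main obstacle. We would handle it by treating $(-\Delta)^s w$ as a bounded Hölder forcing in $D$, so that the strong maximum principle applies to $w_t-\Delta w+(M-c)w\ge f$ with $f\le0$ near the zero set, using the nonnegativity already proved.
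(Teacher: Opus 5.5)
Your nonnegativity step (exponential change of unknown, negative interior minimum, antisymmetric splitting giving $(-\Delta)^s w(x_0,t_0)<0$) matches the paper's argument. So does the first half of your strong part: an interior zero at a time $t_1>\tau$ forces $w(\cdot,t_1)\equiv 0$ in $H$.

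The gap is the backward propagation from $w(\cdot,t_1)\equiv 0$ to $w\equiv 0$ on $D\times[\tau,t_1]$, which you identify but do not close. Nirenberg's strong maximum principle needs $w_t-\Delta w+\tilde c\,w\ge 0$ on a whole parabolic neighbourhood of the zero. Here the forcing $-(-\Delta)^s w$ is known to be $\ge 0$ only at points where $w$ vanishes, not on a neighbourhood. An inequality $w_t-\Delta w+\tilde c\,w\ge f$ with $f\le 0$, as literally written in your last sentence, has the unfavourable sign and gives no strong maximum principle at all: $w=|x-x_1|^2$ satisfies $w_t-\Delta w\ge -2n$. Moreover, for $t<t_1$ nothing you have proved prevents $w(\cdot,t)$ from being strictly positive in $D$. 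Then there are no earlier zeros from which a local argument could start, and the openness of your set $Z$ is never established. A telltale sign is that you never use the lower bound $c\ge c_0$, which is exactly what the backward step requires.

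The missing idea, as in the paper, is to test against the principal odd eigenfunction $\varphi_{odd}$ of $D\cup\tilde D$ from Lemma \ref{lemma.oddeigenf}, chosen negative in $D$. Set $W(t)=\int_D w\,\varphi_{odd}\,dx\le 0$. Multiply the inequation by $\varphi_{odd}<0$ and integrate by parts; the paper writes this as an identity, but the extra exterior and boundary terms are $\le 0$ because $w\ge 0$ on $H\setminus D$. Using $w\varphi_{odd}\le 0$ and $M-c\le K:=\sup_{[\tau,t_1]}(M-c_0)$, you get $W'\le -(\mu_{odd}+K)W$. Hence $e^{(\mu_{odd}+K)t}W$ is nonincreasing, nonpositive and zero at $t_1$, so it vanishes on $[\tau,t_1]$. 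Since $w\ge 0>\varphi_{odd}$ in $D$, this forces $w\equiv 0$ there.

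If you prefer to stay closer to your maximum-principle route, a forward barrier also works. It additionally avoids the boundary regularity tacitly used in that integration by parts and the smoothness of $\partial(D\cup\tilde D)$ needed in Lemma \ref{lemma.oddeigenf}. Suppose $w(\bar x,t_0)>0$ for some $t_0<t_1$.
\begin{enumerate}
\item Pick a ball $B=B_\rho(\bar x)$ with $\overline B\subset D$ on which $w(\cdot,t_0)>0$.
\item Let $\phi=(\rho^2-|x-\bar x|^2)_+^3$ and $\psi(x)=\phi(x)-\phi(\hat x)$.
\item Apply your nonnegativity result, with $D$ replaced by $B$, to $w-\varepsilon e^{-A(t-t_0)}\psi$ with $A$ large. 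This works because near $\partial B$ both $\Delta\phi>0$ and $-(-\Delta)^s\psi>0$ (by antisymmetry), $\phi$ is bounded below inside, and again $M-c\le K$.
\end{enumerate}
This yields $w(\bar x,t_1)>0$, a contradiction.

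Finally, the case $t_1=\tau$ cannot be handled by interpretation. The conclusion is false there, since the initial datum may vanish at a single point, so the strong statement must be restricted to $t_1>\tau$.
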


	Our second objective is to obtain various properties on the behaviour of the quenching phenomena that solutions of \eqref{1.1} present. First, we prove that there exists a threshold $\lambda^\ast(p,\Omega)>0$ such that, if $\lambda>\lambda^\ast$, then every solution of \eqref{1.1} presents quenching in finite time. What is more, the threshold $\lambda^\ast$ becomes bigger as the domain shrinks.
	
	\begin{proposition} \label{prop.todoquenchea}
    Let $\mu_1$ be the principal Dirichlet eigenvalue of $\mathcal{-L}$ as defined in \eqref{eq.principaleigenvalue}. If $\lambda > \mu_1 (1+p)^{-1}$, then every solution of \eqref{1.1} presents quenching in finite time.
	\end{proposition}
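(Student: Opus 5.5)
This is Kaplan's eigenfunction method. Let $\varphi_1$ be the principal Dirichlet eigenfunction of $-\mathcal{L}$ associated with $\mu_1$ in \eqref{eq.principaleigenvalue}. We choose it positive in $\Omega$, vanishing in $\mathbb{R}^n\setminus\Omega$, and normalized by $\int_\Omega \varphi_1\,dx = 1$. Arguing by contradiction, assume some solution $u$ of \eqref{1.1} does not quench in finite time. Since $u_0\ge 0$ and the absorption term is positive, comparison with the zero subsolution gives $0\le u<1$ on $\Omega\times(0,T)$. Because quenching is excluded, $T=\infty$. Consider the functional
\begin{equation*}
E(t)=\int_\Omega u(x,t)\varphi_1(x)\,dx ,
\end{equation*}
which takes values in $[0,1)$ for all $t\ge0$.

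First we differentiate $E$ and use the symmetry of $\mathcal{L}$ with the common exterior Dirichlet condition. Both the Laplacian part (Green's formula, since $u=\varphi_1=0$ on $\partial\Omega$) and the fractional part (symmetry of the bilinear form over $\mathbb{R}^{2n}$, since both functions vanish outside $\Omega$) allow moving the operator onto $\varphi_1$. This gives
\begin{equation*}
E'(t)=\int_\Omega u\,\mathcal{L}\varphi_1\,dx+\lambda\int_\Omega (1-u)^{-p}\varphi_1\,dx=-\mu_1E(t)+\lambda\int_\Omega (1-u)^{-p}\varphi_1\,dx .
\end{equation*}
The regularity needed to justify this step should come from the well-posedness theory. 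For $t\to0$ one can work on $[\tau,T)$ with $\tau>0$, or use the weak formulation tested with $\varphi_1$.

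Next comes the convexity estimate, which is where the threshold $\mu_1(1+p)^{-1}$ enters. The function $f(v)=(1-v)^{-p}$ is convex on $[0,1)$, and $\varphi_1\,dx$ is a probability measure. Jensen's inequality therefore gives $\int_\Omega(1-u)^{-p}\varphi_1\ge(1-E)^{-p}$, hence
\begin{equation*}
E'(t)\ge \lambda(1-E)^{-p}-\mu_1E=:g(E).
\end{equation*}
We then show that $g$ is bounded below by a positive constant on $[0,1)$ exactly when $\lambda>\mu_1(1+p)^{-1}$. Since $(1-E)^{p}E\le \max_{[0,1]}v(1-v)^p=p^p(1+p)^{-1-p}\le (1+p)^{-1}$, we get
\begin{equation*}
g(E)=(1-E)^{-p}\bigl(\lambda-\mu_1E(1-E)^p\bigr)\ge \lambda-\mu_1(1+p)^{-1}=:\delta>0 .
\end{equation*}
The hypothesis is slightly stronger than necessary, since $p^p\le 1$, so the cruder bound is enough.

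Finally, $E'\ge\delta$ integrates to $E(t)\ge E(0)+\delta t$. This exceeds $1$ for $t>(1-E(0))/\delta$, contradicting $E<1$. Hence $T<\infty$. A solution that stops existing at finite $T$ with $u<1$ bounded away from $1$ could be continued by the local theory, so $\limsup_{t\nearrow T}\|u(\cdot,t)\|_{L^\infty(\Omega)}=1$, and $u$ quenches. In fact $T\le (1-E(0))/\delta$.

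The main obstacle is the first step. We must rigorously justify the differentiation of $E$ and the integration by parts for the mixed nonlocal operator, with exterior condition on $\mathbb{R}^n\setminus\Omega$, using the regularity available for solutions. We must also check that $\varphi_1$ is positive with $\int_\Omega\varphi_1=1$ possible, that is, that the principal eigenfunction is of one sign. The rest is elementary.
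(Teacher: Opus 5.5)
Your proof is correct and is essentially the paper's argument: the functional $\int_\Omega u\varphi_1$, self-adjointness of $\mathcal{L}$, and Jensen. The differences are minor. The paper runs it only for $u_0\equiv 0$ and then extends to all data by comparison, and it linearizes $(1-F)^{-p}\ge 1+pF$ to get $F'\ge\lambda+(\lambda p-\mu_1)F$; you treat arbitrary data directly and keep the full nonlinearity, which even gives the sharper threshold $\lambda>\mu_1p^p(1+p)^{-1-p}$.

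One small slip: the reason $p^p(1+p)^{-1-p}\le(1+p)^{-1}$ holds is $(p/(1+p))^p\le 1$, not $p^p\le 1$, which fails for $p>1$.
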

	
	We also prove that, for any initial data $u_0$, one can take a sufficiently small $\lambda$ such that the solution of \eqref{1.1} is global in time. This time, the behaviour of the threshold differs significantly with the size of the domain: it inherits the fractional laplacian order when the domain is large, and the classical laplacian order if the domain is small.
	
	\begin{proposition} \label{prop.solsglobales}
    Let $\delta\in(0,1)$ and $u_0$ be a continuous function in $\Omega$ such that $u_0 \leq 1-\delta$. Let $x_0\in\Omega$ and $R>0$ such that $\Omega \subset B_R(x_0)$. Define $\lambda_0>0$ as
    \begin{equation*}
        \lambda_0 = \left(\frac{\delta}{2}\right)^{1+p} \max \left\{ \frac{1}{C_{ns} R^{2s}}, \frac{2n r^2}{R^2} \right\},
    \end{equation*}
    where $C_{ns} = \frac{\Gamma(n/2)}{2^{2s}\Gamma(s+n/2)\Gamma(s+1)}$ and $r\in(0,1)$ is fixed and independent of $x_0$, $\Omega$, $R$.
    
    If $\lambda \leq \lambda_0$, the solution $u$ of \eqref{1.1} with initial data $u_0$ and parameter $\lambda$ is global.
\end{proposition}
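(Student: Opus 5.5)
We construct a stationary supersolution $w$ of \eqref{1.1} with $u_0\le w<1$ and conclude by comparison that $u\le w$ for all times, so $u$ cannot quench and the solution is global. The barrier is built from explicit radial functions centred at $x_0$, one for each part of $\mathcal{L}$. For the fractional part we use the torsion function of the ball, $\phi(x)=(R^2-|x-x_0|^2)_+^s$. It satisfies $(-\Delta)^s\phi = \frac{1}{C_{ns}}$ in $B_R(x_0)$, where $C_{ns}$ is the constant in the statement, and $0\le\phi\le R^{2s}$. For the local part we use the paraboloid $\psi(x)=R^2-|x-x_0|^2$, for which $-\Delta\psi=2n$ and $0\le\psi\le R^2$ on $\overline{B_R(x_0)}$.

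\textbf{Step 1 (choice of barrier).} Depending on which term in the maximum defining $\lambda_0$ is attained, set
\begin{equation*}
w=1-\delta+\tfrac{\delta}{2}\,\frac{\phi}{R^{2s}}\quad\text{or}\quad w=1-\delta+\tfrac{\delta}{2}\,\frac{r^2\psi}{R^2}\,\eta,
\end{equation*}
with $\eta$ a cutoff, which is needed in the second case. In both cases $u_0\le 1-\delta\le w\le 1-\delta/2$ in $\Omega$. Hence $\lambda(1-w)^{-p}\le\lambda(2/\delta)^p$.

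\textbf{Step 2 (supersolution inequality).} We need $-\mathcal{L}w\ge\lambda(1-w)^{-p}$ in $\Omega$.

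In the first case $-\Delta\phi\ge 0$, because $\phi$ is a concave power of a concave function. Therefore
\begin{equation*}
-\mathcal{L}w\ \ge\ \frac{\delta}{2R^{2s}C_{ns}}.
\end{equation*}
This dominates $\lambda(2/\delta)^p$ precisely when $\lambda\le(\delta/2)^{1+p}/(C_{ns}R^{2s})$. This is the first term of $\lambda_0$, up to checking whether the paper's normalization of $C_{ns}$ produces exactly this expression.

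In the second case the paraboloid is not superharmonic for $(-\Delta)^s$ near the edge. We expect the factor $r^2$ to encode working on a smaller ball or with a smooth radial modification, so that $(-\Delta)^s$ of the bump is nonnegative where needed. The local term then contributes $\frac{\delta}{2}\cdot\frac{2n r^2}{R^2}$, which gives the second term of $\lambda_0$. Taking the better of the two barriers yields the maximum in $\lambda_0$.

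\textbf{Step 3 (comparison).} The constant $1-\delta$ is not zero outside $\Omega$. This is harmless because the comparison is carried out on $z=w-u$. The zeroth-order term $\lambda[(1-w)^{-p}-(1-u)^{-p}]$ is linearized as $c(x,t)z$ with $c$ bounded on any interval where $u<1$. Outside $\Omega$ we have $z=w-0\ge0$, and at $t=0$ we have $z\ge0$. We then need a parabolic maximum principle for $\mathcal{L}$ with bounded $c$ and nonnegative exterior data. This follows as in the proof of Theorem \ref{teo.movingplanes}: multiply by $e^{-Mt}$ and argue at a negative minimum, where $\Delta z\ge0$ and $-(-\Delta)^s z\ge0$, using nonnegativity outside $\Omega$. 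It gives $u\le w\le 1-\delta/2$ on $[0,T)$. Together with the standard continuation criterion, this forces $T=\infty$.

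\textbf{Main obstacle.} The main difficulty is the second barrier. Controlling the sign of $(-\Delta)^s$ applied to a truncated paraboloid uniformly, with a fixed $r\in(0,1)$ independent of $R$ and $x_0$, is delicate. We would first try a scaled function $\psi_r(x)=\Phi((x-x_0)/R)$, with $\Phi$ a fixed radial profile equal to $r^2-|y|^2$ near the origin. We would show $(-\Delta)^s\Phi\ge0$ on $\Omega/R$, using the fact that $\Phi$ attains its maximum there, and exploit the scaling $(-\Delta)^s\psi_r=R^{-2s}(-\Delta)^s\Phi$.
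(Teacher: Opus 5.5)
\textbf{There is a genuine gap in your second case.} Your first barrier and your comparison step are correct and match the paper. The paper's first barrier $1-\delta+C_{ns}\lambda(\delta/2)^{-p}(R^2-|x-x_0|^2)_+^s$ is your $w$ with a $\lambda$-dependent amplitude, and $(-\Delta)^s(R^2-|x|^2)_+^s=1/C_{ns}$ in $B_R$ holds with the same constant the paper uses. The second barrier is where the gap is, and you leave it open. The missing ingredient is the meaning of $r$: it is the unique radius at which $(-\Delta)^s\chi$, with $\chi(x)=(1-|x|^2)_+$, changes sign (Lemma \ref{lemma.fraclaplaciansign}), so that $(-\Delta)^s\chi>0$ exactly for $|x|<r$. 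The paper puts the paraboloid on the \emph{larger} ball $B_{R/r}(x_0)$, with no cutoff. In your normalization, take
\begin{equation*}
w(x)=1-\delta+\frac{\delta}{2}\,\frac{r^2}{R^2}\Big(\frac{R^2}{r^2}-|x-x_0|^2\Big)_+ .
\end{equation*}
Then $1-\delta\le w\le 1-\delta/2$ on $\mathbb{R}^n$, and $-\Delta w=\frac{\delta}{2}\frac{2nr^2}{R^2}$ in $B_{R/r}(x_0)\supset\Omega$. Writing $(a^2-|y|^2)_+=a^2\chi(y/a)$ with $a=R/r$ and scaling gives
\begin{equation*}
(-\Delta)^s w(x)=\frac{\delta}{2}\,\frac{r^2}{R^2}\Big(\frac{R}{r}\Big)^{2-2s}\big((-\Delta)^s\chi\big)\Big(\frac{r(x-x_0)}{R}\Big)>0\quad\text{for } |x-x_0|<R .
\end{equation*}
Hence $-\mathcal{L}w\ge\frac{\delta}{2}\frac{2nr^2}{R^2}\ge\lambda(2/\delta)^p$ in $\Omega$, which is exactly the second term of $\lambda_0$. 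The factor $r^2$ in your formula is the price of enlarging the ball by $1/r$ while keeping the height at most $\delta/2$. It does not come from shrinking the ball.

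The routes you sketch instead do not work as stated. The paraboloid on $B_R(x_0)$ has $(-\Delta)^s$ negative for $rR<|x-x_0|<R$, and this region may meet $\Omega$; a cutoff $\eta$ does not by itself control that sign. In your ``main obstacle'' plan, the fact that $\Phi$ attains its maximum gives $(-\Delta)^s\Phi\ge0$ only at the maximum point, not on all of $\Omega/R$. Moreover, a profile equal to $r^2-|y|^2$ is negative for $|y|>r$, so $w<1-\delta$ on parts of $\Omega$ and $w\ge u_0$ fails. As it stands, your argument proves the proposition only with $\lambda_0$ replaced by its first term $(\delta/2)^{1+p}/(C_{ns}R^{2s})$. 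That misses exactly the regime of small $R$, where the local term governs.
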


	Finally, we prove that the quenching set of any solution of \eqref{1.1} that presents quenching in finite time lies in a compact set inside of $\Omega$. Particularly, no solution of \eqref{1.1} can present quenching in points arbitrarily close to $\partial\Omega$.
	
	\begin{theorem} \label{teo.quenchingset}
	Let $u_0 \in H_0^1(\Omega) \cap C_0(\Omega)$ such that $u_0 \geq 0$, and $u$ the solution to \eqref{1.1} with initial data $u_0$. Then there exists some $\kappa>0$ such that $Q(u) \subset K_{\kappa}$, where $K_{\kappa} = \{x\in\Omega : dist(x,\partial\Omega)\geq \kappa\}$.
\end{theorem}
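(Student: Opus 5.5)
The plan follows the classical Friedman--McLeod argument: first show that $u$ is monotone near $\partial\Omega$ in directions close to the inward normal, using the parabolic moving planes of Theorem \ref{teo.movingplanes}, and then deduce that near the boundary $u$ is controlled by its values deeper inside.

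\textbf{Step 1: reduction to positive times.} Fix a small $\tau>0$ with $\tau<T$. By the strong maximum principle and the Hopf-type behaviour of $\mathcal{L}$ (the Laplacian part dominates near the boundary because $\Omega$ is $C^{2,\gamma}$), the solution satisfies $u(\cdot,\tau)>0$ in $\Omega$ and $\partial_\nu u(\cdot,\tau)<0$ on $\partial\Omega$. This gives a uniform $\kappa_0>0$ such that, for every boundary point $y$ and every unit vector $e$ with $e\cdot\nu(y)<-\tfrac12$, we have $\partial_e u(\cdot,\tau)>0$ in the $\kappa_0$-neighbourhood of $y$. Consequently, for every hyperplane $\Psi$ orthogonal to such an $e$ that cuts off a cap $D=\Omega\cap H$ of width at most $2\kappa_0$, the reflected function satisfies $u(\hat x,\tau)\ge u(x,\tau)$ for $x\in D$.

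\textbf{Step 2: moving planes.} For such a $\Psi$, with $H$ the side containing the cap, set
\[
w(x,t)=u(\hat x,t)-u(x,t).
\]
By construction $w$ is antisymmetric with respect to $\Psi$. On $H\setminus D$ we have $u(x,t)=0\le u(\hat x,t)$, provided the reflected cap stays inside $\Omega$, which holds for caps that are thin relative to the $C^2$ geometry. Hence $w\ge0$ there. Since $\mathcal{L}$ commutes with reflections, $w$ satisfies
\[
w_t-\mathcal{L}w-c(x,t)w=0 \quad\text{in } D\times(\tau,T'),
\]
where
\[
c(x,t)=\lambda\,\frac{(1-u(\hat x,t))^{-p}-(1-u(x,t))^{-p}}{u(\hat x,t)-u(x,t)}\ge0 .
\]
This coefficient is locally bounded for every $T'<T$, because $u<1$ on compact time intervals before $T$. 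Step 1 gives $w(\cdot,\tau)\ge0$ in $D$. Theorem \ref{teo.movingplanes}, applied on $[\tau,T')$ and then letting $T'\to T$, yields $w\ge0$ on $D\times[\tau,T)$. Letting the plane vary over the admissible family, we conclude that $u$ is nondecreasing along every direction $e$ in the cone $\{e\cdot\nu(y)<-1/2\}$ within distance $\kappa_0$ of each boundary point $y$, for all $t\in[\tau,T)$.

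\textbf{Step 3: excluding quenching near the boundary.} From the cone monotonicity, for any $x$ with $\mathrm{dist}(x,\partial\Omega)<\kappa_0/2$, there is a set $\Gamma_x$ lying in $\{\mathrm{dist}\ge\kappa_0/4\}$ (truncated cone towards the interior) with $|\Gamma_x|\ge c_1>0$ and $u(x,t)\le u(z,t)$ for all $z\in\Gamma_x$. Suppose $x_n\to x^*\in\partial\Omega$ (or merely within distance $\kappa_0/2$ of $\partial\Omega$) with $u(x_n,t_n)\to1$. Then $u\ge u(x_n,t_n)$ on $\Gamma_{x_n}$, which has volume bounded below. To reach a contradiction, I would use the comparison technique of Friedman--McLeod: take $J=u_t-\varepsilon\phi(x)(1-u)^{-p}$, where $\phi$ is a cutoff vanishing on $\partial\Omega$, and show $J\ge0$ near the boundary. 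Integrating then bounds $(1-u)^{1+p}\ge c\,\phi$, which prevents quenching for $\mathrm{dist}(x,\partial\Omega)$ small. A simpler alternative is to use the monotonicity to bound $u$ near the boundary by the interior values together with a local barrier comparison in a boundary strip, where $u=0$ on the outside and the strip is thin, so that $\lambda(1-u)^{-p}$ cannot push $u$ to $1$. Taking $\kappa=\kappa_0/2$ then gives $Q(u)\subset K_\kappa$.

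The main obstacle is Step 3. Monotonicity alone does not directly forbid quenching near the boundary, and the nonlocal term complicates the classical $J$-function argument: $\mathcal{L}$ applied to $\phi(1-u)^{-p}$ produces nonlocal commutator terms that must be shown to be controllable, using the bounds $0\le u<1$ and $u=0$ outside $\Omega$. I would first try the thin-strip barrier, with the strip width chosen so that the principal eigenvalue of the strip exceeds the threshold in Proposition \ref{prop.todoquenchea} for a suitable linearisation, since this avoids commutators. A secondary technical point, easier but not trivial, is ensuring the reflected caps stay in $\Omega$ and keeping Step 1 uniform in the direction $e$.
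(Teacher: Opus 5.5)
\textbf{There is a genuine gap in Step 3.} Your Steps 1--2 are essentially the first half of the paper's proof; the paper only reflects in planes orthogonal to the normal, which is enough. Step 3, which you flag yourself as the obstacle, is where the theorem is actually proved, and none of the routes you sketch closes it.

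\emph{Averaging over $\Gamma_x$.} Cone monotonicity only yields $u(\cdot,t_n)\ge u(x_n,t_n)\to1$ on a set of measure bounded below, which is not contradictory by itself. Averaging is useful for blow-up because it turns integral control into pointwise control. Here $u<1$ already holds, and the only integral information you have, $\int_0^T\!\int_\Omega(1-u)^{-p}\varphi_1\,dx\,dt<\infty$ (from $F\le1$), is compatible with $1-u\sim(T-t)^{1/(1+p)}$ on that set.

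\emph{The functional $J$.} The condition $J=u_t-\varepsilon\phi(1-u)^{-p}\ge0$, integrated from $t$ to $T$, gives only $(1-u(x,t))^{1+p}\ge(1+p)\varepsilon\phi(x)(T-t)$. This is a rate estimate that degenerates at $t=T$, not a bound $(1-u)^{1+p}\ge c\phi$.

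\emph{The thin-strip barrier.} It needs an a priori bound $u\le1-\delta$ on the inner side of the strip, which is the very statement you are trying to prove. Monotonicity bounds strip values by interior values, which is the wrong direction. Also, Proposition \ref{prop.todoquenchea} is only a sufficient condition for quenching, so a large strip eigenvalue excludes nothing.

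\textbf{The missing idea} is to make the moving-plane inequality quantitative using the odd eigenfunction.
Let $\Sigma\subset D\cup\hat D$ be a smoothed domain symmetric with respect to $\Psi$. Let $\varphi,\mu$ be given by Lemma \ref{lemma.oddeigenf} on $\Sigma$, with $\varphi<0$ in $\Sigma\cap H$, and set $\tilde w=w+\varepsilon e^{-\mu t}\varphi$. Then:
\begin{enumerate}
\item Because $-\mathcal{L}\varphi=\mu\varphi$ and $c\ge0$, you get $\tilde w_t-\mathcal{L}\tilde w-c\tilde w=-c\,\varepsilon e^{-\mu t}\varphi\ge0$ in $\Sigma\cap H$.
\item $\tilde w=w\ge0$ on $H\setminus\Sigma$ by your Step 2, and $\tilde w$ is antisymmetric.
\item $\tilde w(\cdot,\tau)\ge0$ for small $\varepsilon$. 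Indeed, $w(\cdot,\tau)>0$ in $D$ and vanishes on $\Psi$ with slope $2\partial_e u(\cdot,\tau)>0$ into $D$, while $\varphi\in C^{1,\beta}(\overline\Sigma)$.
\end{enumerate}
Theorem \ref{teo.movingplanes} then gives, using $T<\infty$,
\[
u(x,t)\le u(\hat x,t)-\varepsilon e^{-\mu t}|\varphi(x)|<1-\varepsilon e^{-\mu T}|\varphi(x)|,\qquad (x,t)\in(\Sigma\cap H)\times[\tau,T).
\]
This keeps $u$ uniformly below $1$ on compact subsets of the cap. The monotonicity from your Step 2 then extends the bound up to $\partial\Omega$: move a point near $\partial\Omega$ inward along $e$ into such a compact set.

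This eigenfunction perturbation of the antisymmetric difference is exactly how the paper concludes.
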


We note that all these results are also valid in the local case where the operator is just $\Delta$: the method of moving planes reduces to the local maximum principle, and Proposition \ref{prop.todoquenchea}, Proposition \ref{prop.solsglobales} and Theorem \ref{teo.quenchingset} have already been proven for the classical laplacian (see for example \cite{ZW2008}), with our thresholds being compatible to those that were already known.

More interestingly, our proofs also extend naturally to the fully fractional case with operator $-(-\Delta)^s$, which, as far as the authors know, had not been studied yet. All the arguments adapt naturally to this case as long as solutions are well-defined and classical.

\noindent\textbf{The paper is structured as follows:} in Section \ref{sec.preliminary} we will introduce some relevant definitions, notation and preliminary results. In particular, we will state the existence and uniqueness of solutions, as well as their regularity, see Proposition \ref{prop.existence}. Then, in Section \ref{sec.positivity}, we will obtain a maximum principle, see Lemma \ref{lemma.maximumprinciple}, and a comparison result, see Lemma \ref{lema.comparacion}. We will also prove the existence and regularity of odd eigenfunctions on symmetric domains, see Lemma \ref{lemma.oddeigenf}. This result is interesting by itself, and it will be very useful for other purposes in conjunction with the parabolic moving planes. In Section \ref{sec.movingplanes}, we prove Theorem \ref{teo.movingplanes}, that is, the parabolic moving planes result for the general class of problems \eqref{cond.movingplanes}, which is one of our main objectives. In Section \ref{sec.quenching}, we will prove the quenching phenomena properties. First, we will obtain sufficient conditions over $\lambda$ under which every solution of \eqref{1.1} presents quenching in finite time, see Proposition \ref{prop.todoquenchea}. We will also obtain, in Proposition \ref{prop.solsglobales}, sufficient conditions on $\lambda$ depending on the initial datum $u_0$ such that the solution to \eqref{1.1} is globally defined. Finally, we will prove Theorem \ref{teo.quenchingset}, in which we determine that quenching cannot happen at the boundary of our domain.

\section{Preliminary notions and results} \label{sec.preliminary}

	In this section we will properly state definitions and results that will be used throughout the paper, taking special care with the regularity of our operator $\mathcal{L} = -(-\Delta)^s + \Delta$. Throughout this section, we will assume that $\Omega$ is an open bounded connected $C^{2,\gamma}$ domain, $k \in \mathbb{N}$, $\alpha \in (0,1)$ and $s\in(0,1)$.
	
	\subsection{Notation and spaces}
	
	First, let us define some of the spaces that will be used extensively in our work.
	
	\begin{align*}
		H_0^1(\Omega) =& \left\{\varphi \in L^2(\Omega) : \nabla \varphi \in L^2(\Omega) \; \text{ and } \; \text{Tr }\varphi = 0 \text{ on } \partial\Omega \right\}, \\
		D(\mathcal{L}) =& \left\{\varphi \in L^2(\Omega) : \mathcal{L} \varphi \in L^2(\Omega) \right\}, \\
		C^k(\Omega) =& \left\{\varphi: \Omega \to \mathbb{R} : D^\beta \varphi \in C(\Omega) \text{ for every } |\beta| \leq k \right\},\\
		C^{k,\alpha} (\Omega) =& \left\{\varphi \in C^k(\Omega) : D^\beta \varphi \in C^{\alpha}(\Omega) \text{ for every } |\beta|=k \right\},\\
		C^{k,\alpha}_{loc} (\Omega) =& \left\{\varphi \in C^k(\Omega) : D^\beta \varphi \in C^{\alpha}_{loc}(\Omega) \text{ for every } |\beta|=k \right\}, \\
		C^{k,1} (\Omega) =& \left\{\varphi \in C^k(\Omega) : D^\beta \varphi \in \text{Lip}(\Omega) \text{ for every } |\beta|=k \right\},\\
		C^{k,1}_{loc} (\Omega) =& \left\{\varphi \in C^k(\Omega) : D^\beta \varphi \in \text{Lip}_{loc}(\Omega) \text{ for every } |\beta|=k \right\},\\
		L^1_{2s} (\mathbb{R}^n) =& \left\{ \varphi \in L^1_{loc}(\mathbb{R}^n) : \int_{\mathbb{R}^n} \frac{|\varphi(y)|}{1+|y|^{n+2s}} dy <\infty \right\}.
	\end{align*}
	
	Notice that the space $H_0^1(\Omega)$ can be naturally identified with the space
	\begin{equation*}
		\tilde{H}_0^1(\Omega) = \left\{\varphi \in L^2(\mathbb{R}^n) : \nabla \varphi \in L^2(\mathbb{R}^n) \; \text{ and } \; u\equiv 0 \text{ a.e. in } \mathbb{R}^n\setminus \Omega  \right\},
	\end{equation*}
	which is the actual space we need to use due to the nonlocal nature of our operator $\mathcal{L}$, see \cite{BDVV2023}. However, we will identify these two spaces implicitly throughout the paper and every $H_0^1(\Omega)$ function with its extension on $\tilde{H}_0^1(\Omega)$.
	
	The space $L^1_{2s}(\mathbb{R}^n)$ is often called the ``tail space'', and it contains the functions whose ``decay at infinity'' is controlled enough for $(-\Delta)^s \varphi$ to be well defined. Indeed, it is well known that $(-\Delta)^s \varphi$ is well defined pointwise if $\varphi \in C^{1,1}_{loc}(\Omega) \cap L^1_{2s}(\mathbb{R}^n)$, see \cite{S2007}.
	
	\subsection{Existence, uniqueness and regularity of solutions}
	
	Let us now define the concept of classical solution of \eqref{1.1} as follows.
	
	\begin{definition} \label{def.solution}
		We say that $u$ is a classical solution of \eqref{1.1} on $[0,T)$ if
		\begin{equation*}
			u \in C([0,T),H_0^1(\Omega)) \cap C((0,T), D(\mathcal{L})) \cap C^1((0,T), L^2(\Omega)),
		\end{equation*}
		the equation in \eqref{1.1} is satisfied with $u$ in $L^2(\Omega)$ for every $t\in(0,T)$, $u(x,0)=u_0(x)$ a.e. $x\in\Omega$ and $u(x,t) = 0$ a.e. $x\in\mathbb{R}^n\setminus\Omega$ for every $t\in[0,T)$.
	\end{definition}
	
	Based on this definition, we can state a result on existence, uniqueness and regularity of solutions of \eqref{1.1}. This was proved in \cite{JP2026} by the authors.
	
	\begin{proposition} \label{prop.existence}
		For every $u_0 \in H_0^1(\Omega)$ that satisfies $1>u_0\geq 0$ a.e. $x\in\Omega$, there exists some $T=T(u_0)>0$ such that system \eqref{1.1} admits a unique classical solution $u$. Furthermore, $u(\cdot, t) \in C^{2,\alpha}_{loc}(\Omega)$ for every $t\in[0,T)$ and $u_t \in C^\beta_{loc}(\Omega \times [0,T))$.
    	
    	Finally $\limsup_{t\to T} \|u(\cdot,t)\|_\infty = 1$ and, if $\limsup_{t\to t_0} \|u(\cdot,t)\|_\infty = 1$, then $t_0=T$.
	\end{proposition}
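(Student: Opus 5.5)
The plan is to treat \eqref{1.1} as an abstract semilinear Cauchy problem $u' + Au = f(u)$ in $L^2(\Omega)$, with $A=-\mathcal{L}$ and $f(u)=\lambda(1-u)^{-p}$. The singularity of $f$ is removed by truncation. Fix $\delta\in(0,1)$ with $\|u_0\|_\infty\le 1-2\delta$; the bound $u_0\le 1-2\delta$ is not automatic from $1>u_0$ a.e., so I would assume $u_0\in L^\infty$ with $\operatorname{ess\,sup} u_0<1$, or first approximate. Replace $f$ by a globally Lipschitz, bounded, nonnegative $f_\delta$ that agrees with $f$ on $(-\infty,1-\delta]$.

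\textbf{The operator.} The form $\mathcal{E}(\varphi,\psi)=\int\nabla\varphi\cdot\nabla\psi+\frac{C_{n,s}}{2}\iint\frac{(\varphi(x)-\varphi(y))(\psi(x)-\psi(y))}{|x-y|^{n+2s}}$ on $\tilde H^1_0(\Omega)$ is closed, symmetric and coercive. Coercivity and closedness hold because the Gagliardo seminorm is controlled by $\|\nabla\varphi\|_{L^2}$ for functions vanishing outside the bounded set $\Omega$. Hence $A$ is self-adjoint and positive, with $D(A^{1/2})=H^1_0(\Omega)$. It generates an analytic contraction semigroup $e^{-tA}$. Since $\mathcal{E}$ is a Dirichlet form, this semigroup is positivity preserving and $L^\infty$-contractive.

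\textbf{Local existence and uniqueness.} A Banach fixed point for the mild formulation
\begin{equation*}
u(t)=e^{-tA}u_0+\int_0^t e^{-(t-\sigma)A}f_\delta(u(\sigma))\,d\sigma
\end{equation*}
in $C([0,T_0],H^1_0(\Omega))\cap L^\infty$ gives a unique mild solution. The fixed point uses the smoothing estimate $\|A^{1/2}e^{-tA}\|\le Ct^{-1/2}$ and the Lipschitz bound on $f_\delta$. Analyticity of the semigroup, together with the fact that $f_\delta(u)$ is Hölder in time with values in $L^2$ (standard bootstrap), upgrades the mild solution to a classical one in the sense of Definition~\ref{def.solution}.

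\textbf{Positivity and removal of the truncation.} $L^\infty$-contractivity gives $\|u(t)\|_\infty\le\|u_0\|_\infty+t\|f_\delta\|_\infty$. Hence $u\le 1-\delta$ for $t\le T_0(\delta)$, where $T_0(\delta)$ depends only on $\delta$, $\lambda$ and $p$. On that interval $f_\delta(u)=f(u)$, so $u$ solves \eqref{1.1}. Since $f\ge0$, positivity of the semigroup gives $u\ge 0$. Uniqueness among solutions with values below $1$ follows from the local Lipschitz property of $f$ on $\{u\le 1-\delta\}$ and a Gronwall argument.

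\textbf{Continuation and the blow-up alternative.} The existence time depends only on $\operatorname{ess\,sup}u_0$, so the solution can be restarted as long as $\sup_{[0,t]}\|u\|_\infty<1$. Define $T$ as the supremum of such extension times.
\begin{itemize}
\item If $T<\infty$ and $\limsup_{t\to T}\|u\|_\infty<1$, restarting near $T$ extends the solution past $T$, a contradiction. Hence $\limsup_{t\to T}\|u\|_\infty=1$.
\item If $\limsup_{t\to t_0}\|u\|_\infty=1$ for some $t_0<T$, this contradicts $\sup_{[0,t_0]}\|u\|_\infty<1$. Here we use continuity in $L^\infty$, which follows from the mild formulation. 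Hence $t_0=T$.
\end{itemize}

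\textbf{Interior regularity (expected main obstacle).} We need $u(\cdot,t)\in C^{2,\alpha}_{loc}(\Omega)$ and $u_t\in C^\beta_{loc}$. I would write the equation as the local heat equation
\begin{equation*}
u_t-\Delta u=g:=-(-\Delta)^s u+f(u)
\end{equation*}
and bootstrap on compact subsets $K\Subset\Omega$, starting from $u\in L^\infty$, $u=0$ outside $\Omega$.
\begin{enumerate}
\item $L^p$ parabolic estimates give $u\in C^{1+\epsilon}$ in space locally.
\item Split $(-\Delta)^s u$ on $K$ into a near part and a far part. The near part is controlled by the local $C^{1+\epsilon}$ or $C^{2s+\epsilon}$ norm. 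The far part is smooth in $x$ because the kernel is smooth away from the diagonal and $u\in L^1_{2s}$. Hence $g$ is Hölder in space.
\item Control the time Hölder regularity of $g$ from the $C^1((0,T),L^2)$ regularity combined with interpolation.
\item Interior parabolic Schauder estimates then yield $C^{2+\alpha,1+\alpha/2}$ regularity locally, which gives the stated regularity of $u$ and $u_t$.
\end{enumerate}
The delicate point is closing this loop for all $s\in(0,1)$. When $2s\ge1$ the near part requires more than $C^{1+\epsilon}$, so the bootstrap must proceed in finitely many steps, each gaining a fixed amount of Hölder regularity. The time regularity of the nonlocal term must also be controlled uniformly near $t=0$, where only $u_0\in H^1_0$ is known. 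I expect this may force the statement near $t=0$ to be read on $(0,T)$, or to use the smoothing of the analytic semigroup.
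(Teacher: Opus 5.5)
The paper does not prove this proposition; it only cites \cite{JP2026}, so there is no argument here to compare against. Your existence part is the standard route and is sound. It consists of a bounded Lipschitz truncation $f_\delta$, the self-adjoint operator $A=-\mathcal{L}$ of the Dirichlet form $\mathcal{E}$ with $D(A^{1/2})=H^1_0(\Omega)$, a mild fixed point, the sub-Markovian bound $\|u(t)\|_\infty\le\|u_0\|_\infty+t\lambda\delta^{-p}$, and restarting. Uniqueness comes out in the class $\sup_{[0,t]}\|u\|_\infty<1$ for $t<T$, which is the class the paper tacitly uses (cf.\ $M<1$ in Lemma~\ref{lema.comparacion}). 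In your second bullet you do not need $L^\infty$-continuity: by the definition of $T$, some $t_1\in(t_0,T)$ has $\sup_{[0,t_1]}\|u\|_\infty<1$.

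Your caveats on the statement should be asserted rather than hedged.
\begin{enumerate}
\item If $\operatorname{ess\,sup}u_0=1$, which is compatible with $0\le u_0<1$ a.e., then $u(t)\to u_0$ in $L^2$ gives $\liminf_{t\to0}\|u(t)\|_\infty\ge 1$. The last clause with $t_0=0$ would then force $T=0$. Approximation cannot rescue this, since your existence time $\delta^{1+p}/\lambda$ degenerates. So $\operatorname{ess\,sup}u_0<1$ is a necessary correction of the hypothesis.
\item $\limsup_{t\to T}\|u(\cdot,t)\|_\infty=1$ requires $T<\infty$, because Proposition~\ref{prop.solsglobales} gives global solutions with $u\le 1-\delta/2$.
\item Since $u(\cdot,0)=u_0$ is only in $H^1_0(\Omega)$, the regularity claims can only hold for $t\in(0,T)$. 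Semigroup smoothing does nothing at $t=0$ itself.
\end{enumerate}

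\textbf{Gap: the start of the regularity bootstrap.} The step that fails as written is the beginning of your regularity bootstrap, and the problem is not only the case $2s\ge1$ that you flag. Step~1 needs $g=-(-\Delta)^s u+f(u)\in L^p_{loc}$ with $p>n+2$. A priori you only know $u(\cdot,t)\in D(A)\subset H^1(\mathbb{R}^n)$, hence $(-\Delta)^s u(\cdot,t)\in H^{1-2s}(\mathbb{R}^n)$. That is merely $L^2$ for $s\le 1/2$ and not even a function for $s>1/2$, so $C^{1+\epsilon}$ is not available at the outset.

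The repair is to run your near/far splitting one level lower, in the Sobolev scale:
\begin{itemize}
\item With cutoffs supported in $\Omega\times(0,T)$, interior estimates for $u_t-\Delta u=g$ take $g\in L^2_tH^{\sigma}_x$ to $u\in L^2_tH^{\sigma+2}_x$; the cutoff commutators cost at most one derivative.
\item The splitting then returns $(-\Delta)^s u\in L^2_tH^{\sigma+2-2s}_x$ locally.
\item Starting from $\sigma=1-2s$, each round gains $\min\{1,2-2s\}>0$ derivatives.
\end{itemize}
After finitely many rounds $g$ is locally bounded, continuous in $t$ and H\"older in $x$. Your Steps~3--4 then give $u(\cdot,t)\in C^{2,\alpha}_{loc}(\Omega)$ and $u_t\in C^\beta_{loc}(\Omega\times(0,T))$.
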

	
	This proposition gives us higher interior regularity than the one strictly needed for a classical solution. Thanks to this, we will be able to work with solutions for which $\Delta u(\cdot,t)$ and $(-\Delta)^s u(\cdot,t)$ are well defined pointwise for every $t\in[0,T)$.
	
	\subsection{The eigenvalue problem}
	
	Another important ingredient we will need is the existence of eigenfunctions of our problem. First, define the Dirichlet form associated to our operator $\mathcal{L}$, which is:
	\begin{equation} \label{eq.dirichletform}
		\mathcal{E}(\varphi,\psi) = \frac{C_{n,s}}{2}\iint_{\mathbb{R}^n\times\mathbb{R}^n} \frac{(\varphi(x)-\varphi(y))(\psi(x)-\psi(y))}{|x-y|^{n+2s}} dy dx + \int_\Omega \nabla\varphi \nabla\psi dx,
	\end{equation}
	for every $\varphi,\psi \in H_0^1(\Omega)$.
	
	Then we can define $\mu_1$ as the principal Dirichlet eigenvalue of our operator $-\mathcal{L}$ as
\begin{equation} \label{eq.principaleigenvalue}
    \mu_{1} = \inf_{u\in H_0^1(\Omega)\setminus \{0\}} \frac{\mathcal{E}(u,u)}{\displaystyle \int_\Omega |u|^2 dx} \in [0,\infty).
\end{equation}
In \cite{BDVV2023}, they prove that there exists an associated eigenfunction $\varphi_1 \in \mathcal{C}^{1,\beta}(\overline\Omega)$ with $\int_\Omega\varphi_{1}=1$ that satisfies $\varphi_1\geq 0$ and is a classical solution of the problem
\begin{equation} \label{eq.eigensystem}
    \left\{
    \begin{array}{ll}
        \displaystyle -\mathcal{L} \varphi_{1}(x)= \mu_{1} \varphi_{1}(x), & x\in\Omega, \\
        \varphi_1 (x) = 0, & x \in\mathbb{R}^n \setminus \Omega.
    \end{array}
    \right.
\end{equation}
We refer to $\varphi_{1}$ as the principal eigenfunction of $-\mathcal{L}$ in $\Omega$.
	
	\section{Maximum principle, comparison results and odd eigenfunctions} \label{sec.positivity}
	
	In the study of quenching phenomena, one of the most useful techniques is the usage of maximum and comparison principles. As such, we will devote this section to proving these results, as well as the existence of symmetric eigenfunctions with which we will need to compare our solutions during the proofs of our main theorems.

	\subsection{Maximum principle and comparison results}
	
	Let us first establish that every solution of \eqref{1.1} is strictly positive and that the system accepts a comparison principle. Although these proofs are standard, we include them for the sake of completion.
	
	\begin{lemma} \label{lemma.maximumprinciple}
		Let $v:\mathbb{R}^n \times[0,\tau] \to \mathbb{R}$ be a function such that
	\begin{equation*}
			v \in C([0,\tau],H_0^1(\Omega)) \cap C^1((0,\tau],L^2(\Omega)) \cap C([0,\tau], L^1_{2s}(\mathbb{R}^n)),
	\end{equation*}
	and such that $v(\cdot,t) \in C^{2,\alpha}_{loc}(\Omega)$ for every $t\in[0,\tau]$ and $v_t \in C^\beta_{loc}(\Omega \times[0,\tau])$. If
	\begin{equation}
    \left\{
    \begin{array}{ll}
        \displaystyle v_t(x,t) \geq \mathcal{L} v(x,t), & (x,t)\in\Omega\times(0,\tau], \\
        v(x,0) \geq 0, & x\in\Omega,\\
        v(x,t) \geq 0, & (x,t)\in\mathbb{R}^n \setminus \Omega \times [0,\tau],
    \end{array}
    \right.
    \end{equation}
	then $v(x,t) \geq 0$ for every $(x,t)\in\Omega \times[0,\tau]$.
	\end{lemma}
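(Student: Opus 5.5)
The plan is an energy argument on the negative part, which avoids needing continuity of $v$ up to $\partial\Omega$. Set $v^- = \max\{-v,0\}$. Since $v(\cdot,t)\in H_0^1(\Omega)$ and $v\ge 0$ in $\mathbb{R}^n\setminus\Omega$, we have $v^-(\cdot,t)\in H_0^1(\Omega)$, it vanishes outside $\Omega$, and $v^-(\cdot,0)=0$. I will show that
\begin{equation*}
E(t)=\frac12\int_\Omega |v^-(x,t)|^2\,dx
\end{equation*}
is nonincreasing on $[0,\tau]$. Since $E(0)=0$, this gives $v^-\equiv0$, hence $v\ge 0$ a.e. in $\Omega$. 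Continuity of $v(\cdot,t)$ in $\Omega$ (from the $C^{2,\alpha}_{loc}$ hypothesis) then upgrades this to every point.

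The steps are as follows.

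\emph{Differentiating $E$.} Because $v\in C^1((0,\tau],L^2(\Omega))$ and the map $w\mapsto \frac12\|w^-\|_{L^2}^2$ is $C^1$ on $L^2$ with derivative $-w^-$, we get
\begin{equation*}
E'(t)=-\int_\Omega v_t\,v^-\,dx \quad \text{for } t\in(0,\tau].
\end{equation*}
Continuity of $v$ into $H_0^1(\Omega)$ makes $E$ continuous at $t=0$.

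\emph{Inserting the inequation.} Multiplying $v_t\ge \mathcal{L}v$ by $v^-\ge0$ and integrating over $\Omega$ gives
\begin{equation*}
E'(t)\le -\int_\Omega \mathcal{L}v\, v^-\,dx = \mathcal{E}(v,v^-).
\end{equation*}

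\emph{Sign of the Dirichlet form.} With $v=v^+-v^-$, we have
\begin{equation*}
\mathcal{E}(v,v^-)=\mathcal{E}(v^+,v^-)-\mathcal{E}(v^-,v^-).
\end{equation*}
For the local part, $\int\nabla v^+\cdot\nabla v^-=0$. For the nonlocal part, the integrand of $\mathcal{E}(v^+,v^-)$ is $(v^+(x)-v^+(y))(v^-(x)-v^-(y))=-v^+(x)v^-(y)-v^+(y)v^-(x)\le0$. Hence
\begin{equation*}
E'(t)\le-\mathcal{E}(v^-,v^-)\le 0.
\end{equation*}
Note that the exterior values of $v$ enter $\mathcal{E}(v^+,v^-)$ only through $v^+$, which is harmless precisely because $v\ge0$ outside $\Omega$.

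\textbf{Main obstacle.} The hard part is the integration by parts identity $-\int_\Omega \mathcal{L}v\,v^-\,dx=\mathcal{E}(v,v^-)$. The hypothesis only gives $v(\cdot,t)\in C^{2,\alpha}_{loc}(\Omega)$ together with $H_0^1$ and tail-space membership, rather than $v(\cdot,t)\in D(\mathcal{L})$. Here $\mathcal{L}v$ is understood pointwise, which is well defined by the regularity and $L^1_{2s}$ remarks.

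I would first try cutting off: take $\eta_k\in C_c^\infty(\Omega)$ with $\eta_k\to 1$, test against $\eta_k v^-$, and pass to the limit in $k$. This uses $v^-\in H_0^1$ and the fact that the fractional kernel pairing converges by the $L^1_{2s}$ control of the tail.

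If that limit is too delicate, the fallback is a pointwise argument. Consider $w=e^{-t}v$. Suppose $\min w<0$ over the region $\{v<0\}$; this set is compactly contained in $\Omega$ away from where $v^-=0$, by the $H_0^1$ trace condition together with continuity. At an interior negative minimum $(x_0,t_0)$ we have $w_t\le0$, $\Delta w\le0$, and $(-\Delta)^s w(x_0)\le0$ because $w\ge w(x_0)$ everywhere, including outside $\Omega$ where $w\ge0$. This yields $w_t-\mathcal{L}w\le0$, while the inequation gives $w_t-\mathcal{L}w\ge -w(x_0,t_0)>0$, a contradiction. Either route gives the conclusion.
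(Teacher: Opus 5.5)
Your main energy route is correct, and it differs genuinely from the paper's proof.

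\textbf{How the two proofs compare.} The paper argues pointwise. At a negative minimum $(x_0,t_0)$ of $v$ with $x_0\in\Omega$, one has $v_t\le 0$, $\Delta v\ge 0$ and $-(-\Delta)^s v(x_0,t_0)>0$. The last inequality is strict because $v\ge 0>v(x_0,t_0)$ on $\mathbb{R}^n\setminus\Omega$, and this contradicts $v_t\ge\mathcal{L}v$. That proof is two lines and uses only the pointwise inequation. However, it tacitly assumes that a negative infimum of $v$ over $\Omega$ is attained at an interior point. That requires control of $v$ near $\partial\Omega$, which is not among the hypotheses. Worse, the paper later applies this lemma in Lemma~\ref{lemma.boundarycontinuity} precisely to \emph{derive} such boundary control, so assuming it would be circular.

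Your Stampacchia-type argument on $\frac12\|v^-\|_{L^2(\Omega)}^2$ instead uses the hypotheses the paper never touches: $v(\cdot,t)\in H_0^1(\Omega)$ and $v\in C^1((0,\tau],L^2(\Omega))$. So the boundary is handled honestly. It also does not rely on the strictness of the nonlocal term, so it works equally for $\Delta$ alone or $-(-\Delta)^s$ alone.

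\textbf{Where the care is needed.} The price of your route is the integration by parts. It is cleaner to pass to the limit in the tested inequality than to prove the identity $-\int_\Omega\mathcal{L}v\,v^-=\mathcal{E}(v,v^-)$ directly. Neither $\mathcal{L}v\,v^-\in L^1(\Omega)$ nor finiteness of $\mathcal{E}(v^+,v^-)$ is known a priori. Concretely:
\begin{enumerate}
\item Testing gives $\int_\Omega v_t\,\eta_k v^-\ge-\mathcal{E}(v,\eta_kv^-)$.
\item Drop $\mathcal{E}(v^+,\eta_kv^-)\le 0$: its local part vanishes and its nonlocal integrand is nonpositive. This yields $\int_\Omega v_t\,\eta_kv^-\ge\mathcal{E}(v^-,\eta_kv^-)$.
\item Let $k\to\infty$. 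On the left use $v_t(\cdot,t)\in L^2(\Omega)$. On the right use $\eta_kv^-\to v^-$ in $H_0^1(\Omega)\hookrightarrow H^s(\mathbb{R}^n)$.
\end{enumerate}
That last convergence is where Hardy's inequality enters, to control $v^-\nabla\eta_k$ in $L^2$. The $L^1_{2s}$ hypothesis only makes each $\mathcal{E}(v,\eta_kv^-)$ finite; it is not what drives the limit, contrary to your sketch.

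\textbf{Your fallback.} The fallback is essentially the paper's argument and inherits its weak point. $H_0^1(\Omega)$ plus continuity in $\Omega$ does not make $\{v<0\}$ compactly contained in $\Omega$: for $n\ge 2$ such functions need not tend to $0$ at $\partial\Omega$. So attainment of the minimum is not justified. Also, at a minimum $\Delta w\ge 0$, not $\le 0$, and the former is what your conclusion $w_t-\mathcal{L}w\le 0$ needs. So ``either route gives the conclusion'' overstates things; rely on the energy route.
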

	
	\begin{proof}
		Suppose that there exists some first time $t_0$ in which $v$ achieves a negative minimum in $\Omega$. Therefore, if we assume without loss of generality that this minimum is attained at $x_0$:
    \begin{equation*}
        v(x_0,t_0) < 0, \;\;\; v_t(x_0,t_0) \leq0, \;\;\; \Delta v(x_0,t_0) \geq 0,
    \end{equation*}
    and
	\begin{equation*}
		-(-\Delta)^s v(x_0,t_0) = \int_{\mathbb{R}^n} \frac{v(y,t_0)-v(x_0,t_0)}{|x-y|^{n+2s}} dy > 0.
	\end{equation*}    
    Then from the conditions of the statement we get that
    \begin{align*}
        0 \geq v_t(x_0,t_0) = -(-\Delta)^s v(x_0,t_0) +\Delta v(x_0,t_0) > 0,
    \end{align*}
    which leads us to a contradiction. Therefore, $u(x,t) \geq 0$ for every $(x,t) \in \Omega \times(0,T)$.
	\end{proof}

\begin{corollary} \label{cor.positividad}
    Let $u$ be a classical solution to system \eqref{1.1}. Then $u(x,t)>0$ for every $(x,t) \in \Omega \times (0,T)$.
\end{corollary}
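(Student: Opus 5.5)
The plan is to get non-negativity first and then upgrade it to strict positivity with a strong maximum principle argument that uses the nonlocal term.

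\textbf{Step 1 (non-negativity).} A classical solution $u$ of \eqref{1.1} satisfies $u_t - \mathcal{L}u = \lambda(1-u)^{-p} > 0$ in $\Omega\times(0,T)$. This uses that $u<1$ before the quenching time, which follows from Proposition \ref{prop.existence}. We also have $u(\cdot,0)=u_0\geq 0$ and $u\equiv 0$ in $(\mathbb{R}^n\setminus\Omega)\times[0,T)$. The regularity from Proposition \ref{prop.existence} gives exactly the hypotheses of Lemma \ref{lemma.maximumprinciple} on every $[0,\tau]$ with $\tau<T$. The tail condition is trivial, because $u(\cdot,t)$ vanishes outside the bounded set $\Omega$ and lies in $L^2(\Omega)$. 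Hence $u\geq 0$ in $\Omega\times[0,T)$.

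\textbf{Step 2 (strictness).} Suppose $u(x_0,t_0)=0$ for some $x_0\in\Omega$ and $t_0\in(0,T)$. Since $u\geq0$ everywhere, $(x_0,t_0)$ is a global minimum in space at time $t_0$, so $\nabla u(x_0,t_0)=0$ and $\Delta u(x_0,t_0)\geq 0$. It is also a minimum in time of $t\mapsto u(x_0,t)$ on $[0,t_0]$, so $u_t(x_0,t_0)\leq 0$; this is a one-sided derivative, and it is legitimate because $u_t$ is continuous by the $C^\beta$ regularity. For the nonlocal part,
\[
-(-\Delta)^s u(x_0,t_0)=C\int_{\mathbb{R}^n}\frac{u(y,t_0)}{|x_0-y|^{n+2s}}\,dy\geq 0 .
\]
Evaluating the equation at $(x_0,t_0)$ then gives
\[
0\geq u_t(x_0,t_0)=-(-\Delta)^s u(x_0,t_0)+\Delta u(x_0,t_0)+\lambda\,(1-0)^{-p}\geq \lambda>0,
\]
which is a contradiction. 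So $u>0$ in $\Omega\times(0,T)$.

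The absorption term alone therefore forces strictness, with no need for a Hopf-type or connectivity argument. This also covers the case $u_0\equiv 0$.

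\textbf{Main obstacle.} The delicate point is justifying the pointwise evaluation at $(x_0,t_0)$. The equation must hold pointwise, not just in $L^2$. This follows from $u(\cdot,t_0)\in C^{2,\alpha}_{loc}(\Omega)$ and $u_t\in C^\beta_{loc}$, which make both sides continuous; an $L^2$ identity between continuous functions then holds everywhere. The same regularity ensures $(-\Delta)^s u(x_0,t_0)$ is defined pointwise as noted in Section \ref{sec.preliminary}.
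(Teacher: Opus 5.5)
Your proposal is correct and follows the paper's proof essentially step for step: you get non-negativity from Lemma \ref{lemma.maximumprinciple}, and at a hypothetical interior zero $(x_0,t_0)$ the signs $u_t\le 0$, $\Delta u\ge 0$, $-(-\Delta)^s u\ge 0$ contradict the strictly positive absorption term $\lambda(1-0)^{-p}=\lambda$. Your extra justifications (the tail-space membership, the pointwise validity of the equation from the interior regularity, and the one-sided time derivative) are details the paper leaves implicit.
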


\begin{proof}
	First, thanks to Lemma \ref{lemma.maximumprinciple}, it is clear that $u\geq 0$ in $\Omega\times[0,T)$. Then we suppose that there exists some time $t_0>0$ in which $u$ touches $0$ at some $x_0\in\Omega$. Therefore:
    \begin{equation*}
        u(x_0,t_0) = 0, \;\;\; u_t(x_0,t_0) \leq0, \;\;\; \Delta u(x_0,t_0) \geq 0, \;\;\; -(-\Delta)^s u(x_0,t_0) \geq 0.
    \end{equation*}
    Plugging these inequalities into the equation for \eqref{1.1}, we get that
    \begin{align*}
        0 \geq u_t(x_0,t_0) = -(-\Delta)^s u(x_0,t_0) +\Delta u(x_0,t_0)+\lambda (1-u)^{-p}(x_0,t_0) > 0,
    \end{align*}
    which leads to a contradiction. Therefore, $u(x,t) > 0$ for every $(x,t) \in \Omega \times(0,T)$.
\end{proof}
\begin{definition} \label{def.supersolucion}
	Let $v:\mathbb{R}^n \times[0,\tau] \to \mathbb{R}$ be a function such that
	\begin{equation*}
			v \in C([0,\tau],H_0^1(\Omega)) \cap C^1((0,\tau],L^2(\Omega)) \cap C([0,\tau], L^1_{2s}(\mathbb{R}^n)),
	\end{equation*}
	and such that $v(\cdot,t) \in C^{2,\alpha}_{loc}(\Omega)$ for every $t\in[0,\tau]$ and $v_t \in C^\gamma_{loc}(\Omega \times[0,\tau])$.
		
	 We then say that $v$ is a supersolution of \eqref{1.1} in $[0,\tau]$ if it satisfies
	\begin{equation} \label{eq.comparacion}
    \left\{
    \begin{array}{ll}
        \displaystyle v_t(x,t) \geq \mathcal{L} v(x,t)+\lambda (1-v)^{-p}(x,t), & (x,t)\in\Omega\times(0,\tau], \\
        v(x,0) \geq u_0(x), & x\in\Omega,\\
        v(x,t) \geq 0, & (x,t)\in\mathbb{R}^n \setminus \Omega \times [0,\tau].
    \end{array}
    \right.
    \end{equation}
    Conversely, we say that $v$ is a subsolution of \eqref{1.1} in $[0,\tau]$ if it satisfies the reverse inequalities.

\end{definition}

Notice that, in particular, a solution of system \eqref{1.1} is both a supersolution and a subsolution in $[0,T)$.

\begin{lemma} \label{lema.comparacion}
    Let $\overline{u}$, $\underline{u}$ be a supersolution and subsolution of \eqref{1.1} respectively in $[0,\tau]$. Then $\overline{u}(x,t) \geq \underline{u}(x,t)$ for every $(x,t) \in \Omega \times[0,\tau]$.
\end{lemma}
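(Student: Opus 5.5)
We set $w = \overline{u} - \underline{u}$ and show $w \geq 0$ in $\Omega\times[0,\tau]$. The plan is to reduce to a linear inequality and argue as in Lemma \ref{lemma.maximumprinciple}, adding an exponential weight to absorb the zero-order term coming from the nonlinearity.

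\textbf{Step 1: linearize the nonlinearity.} Subtracting the inequalities in \eqref{eq.comparacion} for $\overline u$ and its reverse for $\underline u$ gives
\begin{equation*}
w_t - \mathcal{L} w \geq \lambda\big[(1-\overline u)^{-p} - (1-\underline u)^{-p}\big] = c(x,t)\, w \quad \text{in } \Omega\times(0,\tau].
\end{equation*}
By the mean value theorem,
\begin{equation*}
c(x,t) = \lambda p (1-\xi)^{-p-1},
\end{equation*}
with $\xi$ between $\underline u$ and $\overline u$; where $w=0$ we set $c = \lambda p(1-\overline u)^{-p-1}$. The initial and exterior conditions give $w(\cdot,0)\geq 0$ in $\Omega$. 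Outside $\Omega$ we have $\overline u \geq 0$, and the reversed condition gives $\underline u \leq 0$, so $w \geq 0$ on $(\mathbb{R}^n\setminus\Omega)\times[0,\tau]$.

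\textbf{Step 2: bound $c$.} This needs $\overline u,\underline u$ to stay uniformly below $1$ on $\overline\Omega\times[0,\tau]$. This is implicit in the definition, since the term $(1-v)^{-p}$ must make sense and be finite on the closed time interval. Using continuity of the functions (they are classical on $[0,\tau]$, and quenching can only occur at the maximal time), we expect $\sup \overline u,\ \sup \underline u \leq 1-\delta$ for some $\delta>0$. Then $0\leq c \leq M := \lambda p\,\delta^{-p-1}$. Some care is needed at points where $\overline u$ might be close to $1$ while $\underline u$ is not; the mean value form only needs both values to lie in $[0,1)$, which holds on the compact set.

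\textbf{Step 3: weighted minimum argument.} Let $z = e^{-(M+1)t} w$. Then
\begin{equation*}
z_t - \mathcal{L} z \geq (c - M - 1) z .
\end{equation*}
Moreover $z\geq 0$ at $t=0$ and outside $\Omega$. Suppose $z$ takes a negative value in $\Omega\times[0,\tau]$. Continuity, together with $w(\cdot,t)\in H_0^1$ vanishing near the boundary in a suitable sense, lets the minimum be attained at an interior point $(x_0,t_0)$ with $t_0>0$. This is the main technical point: we would justify it using the continuity of $v$ up to the boundary given by the regularity in Proposition \ref{prop.existence}, or else approximate by subdomains. At that point $z_t\leq 0$, $\Delta z\geq 0$, and $-(-\Delta)^s z \geq 0$, because $z(x_0,t_0)$ is a global minimum over $\mathbb{R}^n$ (recall $z\geq0$ outside $\Omega$). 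Hence the left-hand side is $\leq 0$. Since $c-M-1\leq -1<0$ and $z(x_0,t_0)<0$, the right-hand side is $>0$, a contradiction. Therefore $z\geq 0$, hence $w\geq 0$.

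The main obstacle is Step 2 and the attainment of the minimum: we need uniform boundedness of $c$, meaning $\overline u,\underline u$ bounded away from $1$, and boundary continuity. Once these are secured, the argument is the same as in Lemma \ref{lemma.maximumprinciple}.
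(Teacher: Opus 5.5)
Your proof is correct and is essentially the paper's argument. Both linearize the absorption term by the mean value theorem, bound the resulting coefficient using $\max_{\overline\Omega\times[0,\tau]}\max\{\overline u,\underline u\}<1$, and reach a contradiction at an interior global minimum from $\Delta\geq 0$ and $-(-\Delta)^s\geq 0$ there. The only difference is how the zero-order term is absorbed: you use the multiplicative weight $e^{-(M+1)t}$, while the paper adds $\varepsilon e^{\gamma t}$ with $\gamma$ exceeding the coefficient bound, argues at the first time the perturbed difference touches $0$, and then lets $\varepsilon\to 0$.

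The two points you flag as obstacles (the uniform bound below $1$, and attainment of the negative minimum inside $\Omega$) are taken for granted in the paper's proof as well, so you are not missing anything the paper supplies. Note, however, that the existence proposition you cite would not give boundary continuity here: it is an interior result and concerns solutions, not general super- and subsolutions.
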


\begin{proof}
    First, we take the quantity
    \begin{equation*}
        M = \max_{(x,t) \in \overline\Omega \times[0,\tau]} \max\left\{\overline{u}(x,t), \underline{u}(x,t) \right\},
    \end{equation*}   
    and take some $\gamma>0$ such that 
    \begin{equation} \label{condcomparacion}
        \gamma > \lambda p (1-M)^{-p-1}.
    \end{equation}
    
    Define the function $w = \overline{u}-\underline{u} + \varepsilon e^{\gamma t}$. Notice that $w(x,0) >0$ for every $x\in\Omega$. Then, by continuity, $w(x,t)>0$ for every $(x,t) \in \Omega \times [0,\delta)$ for some small $\delta \in (0,\tau]$. We will show that $w(x,t)>0$ for every $(x,t) \in \Omega \times [0,\tau]$.  Otherwise, there exists a first time $t_1 \in (0,\tau]$ such that $\min_{x\in\Omega} w(x,t_1) = 0$. This means that there exists $x_1 \in \Omega$ such that $w(x_1,t_1)=0$, $w_t(x_1,t_1) < 0$, $\Delta w(x_1,t_1)\geq 0$ and $-(-\Delta)^s w(x_1,t_1) \geq 0$.

    From the inequalities satisfied by $\overline{u}_t$ and $\underline{u}_t$, we have that, for every $(x,t) \in \Omega \times [0,\tau]$:
    \begin{align*}
        w_t(x,t) \geq& \, \mathcal{L} w(x,t) + \lambda ((1-\overline{u}(x,t))^{-p}-(1-\underline{u}(x,t))^{-p}) + \varepsilon \gamma e^{\gamma t} \\
        =& \,\mathcal{L} w(x,t) + \lambda p (1-|\xi|)^{-p-1}w(x,t) + \varepsilon e^{\gamma t}(\gamma - \lambda p(1-|\xi|)^{-p-1}),
    \end{align*}
    where $M \geq\max\{\overline{u}, \underline{u}\}(x,t) \geq \xi(x,t) \geq \min\{\overline{u},\underline{u}\}(x,t) \geq 0$ for every $(x,t)\in\Omega\times[0,\tau]$. Now, evaluating the expression at the point $(x_1,t_1)$ and using condition \eqref{condcomparacion}, we get that:
    \begin{equation*}
        0 > w_t(x_1,t_1) \geq \mathcal{L} w(x_1,t_1) + \lambda p (1-|\xi|)^{-p-1}w(x_1,t_1) \geq 0,
    \end{equation*}
    which is a contradiction. We conclude that $w(x,t) > 0$ for every $(x,t) \in \Omega \times[0,\tau]$ and passing to the limit $\varepsilon \to 0$, the result follows.
\end{proof}

\begin{lemma} \label{lema.velocidadpositiva}
    Let $u$ be the solution of \eqref{1.1} with initial datum $u_0$ such that
    \begin{equation*}
        0 \not\equiv \mathcal{L} u_0 + \lambda (1-u_0)^{-p} \geq 0
    \end{equation*}
    for every $x\in\Omega$. Then $u(x,t) \geq u_0 (x)$ and $u_t(x,t) \geq 0$ for every $(x,t) \in \Omega\times (0,T)$.
\end{lemma}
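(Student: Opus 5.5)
The plan is to prove both assertions with the comparison principle of Lemma \ref{lema.comparacion}. The first assertion, $u\geq u_0$, is immediate: the hypothesis says exactly that the time-independent function $\underline{u}(x,t)=u_0(x)$ satisfies $\underline{u}_t = 0 \leq \mathcal{L}u_0+\lambda(1-u_0)^{-p}$ in $\Omega$. It also has the right initial and exterior data ($u_0=u_0$ at $t=0$, and $u_0=0$ outside $\Omega$). So it is a subsolution of \eqref{1.1} on every $[0,\tau]$ with $\tau<T$. We need $\mathcal{L}u_0$ to make sense pointwise, so we assume, as the hypothesis implicitly does, that $u_0$ has the regularity of Definition \ref{def.supersolucion}. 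Since $u$ itself is a supersolution, Lemma \ref{lema.comparacion} gives $u(x,t)\geq u_0(x)$ on $\Omega\times[0,\tau]$ for each $\tau<T$, hence on $\Omega\times[0,T)$.

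For monotonicity in time, fix $h\in(0,T)$ and set $v(x,t)=u(x,t+h)$ on $[0,T-h)$. By time-invariance of the equation, $v$ solves \eqref{1.1} with initial datum $u(\cdot,h)$, and $u(\cdot,h)\geq u_0$ by the first step. Hence $v$ is a supersolution of the problem with initial datum $u_0$, while $u$ is a solution, hence a subsolution, of the same problem. The exterior condition $v=0$ outside $\Omega$ holds, and the time regularity of $v$ on $(0,\tau]$ follows from that of $u$ on $[h,\tau+h]$. Applying Lemma \ref{lema.comparacion} on $[0,\tau]$ for every $\tau<T-h$ gives $u(x,t+h)\geq u(x,t)$. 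Dividing by $h$ and letting $h\to0^+$, using the pointwise existence of $u_t$ from Proposition \ref{prop.existence}, yields $u_t\geq0$ in $\Omega\times(0,T)$.

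The condition $\not\equiv 0$ is not needed for the non-strict conclusions. If a strict statement were wanted, one could differentiate the equation and apply a strong maximum principle to $w=u_t$, which satisfies $w_t=\mathcal{L}w+\lambda p(1-u)^{-p-1}w$. However, the lemma only claims $\geq$, so I would not pursue that.

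The main obstacle is the technical fit with Lemma \ref{lema.comparacion}. That lemma requires both functions to have regularity up to $t=0$ and a finite maximum $M<1$ on $\overline\Omega\times[0,\tau]$. This holds on any $[0,\tau]$ with $\tau<T$, because $\sup_{t\le\tau}\|u(\cdot,t)\|_\infty<1$ by the characterization of $T$ in Proposition \ref{prop.existence}. Working on compact subintervals $[0,\tau]$ with $\tau<T$ (respectively $\tau<T-h$) therefore suffices, and the rest is routine.
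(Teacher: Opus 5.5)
Your proposal is correct and follows the paper's argument: $u_0$ is a stationary subsolution, so Lemma \ref{lema.comparacion} gives $u\ge u_0$; then you compare the time-shift $u(\cdot,\cdot+h)$ with $u$ and let $h\to0^+$. The only cosmetic difference is in this second comparison. The paper rewrites it as a linear equation for the difference quotient $v_h$, with an $e^{-M_\delta t}$ weight and a minimum-point contradiction. That is exactly the mechanism inside the proof of Lemma \ref{lema.comparacion}, which you invoke directly.
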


\begin{proof}
    Following the condition of the statement, it is pretty clear that $u_0(x)$ is a subsolution of \eqref{1.1} with parameter $\lambda$ and itself as initial data according to Definition \ref{def.supersolucion}. Therefore, using Lemma \ref{lema.comparacion}, we know that $u(x,t) \geq u_0(x)$ for every $(x,t)\in\Omega\times[0,T)$.

        
    To prove the second part, consider the system satisfied by $v_h = \frac{u(x,t+h)-u(x,t)}{h}$:
    \begin{equation*}
    \left\{
        \begin{array}{ll}
             \displaystyle (v_h)_t = \mathcal{L} v_h + f^\prime(\xi(x,t)) v_h, &(x,t)\in\Omega\times(0,T-h), \\
             v_h(x,t) = 0, &(x,t)\in (\mathbb{R}^n \setminus \Omega) \times[0,T-h), \\
             v_h(x,0) \geq 0, &x\in\Omega.
        \end{array}
        \right.
    \end{equation*}
	In this system, $\xi(x,t)$ is a function that lies between $u(x,t)$ and $u(x,t+h)$ for every $(x,t) \in \Omega \times [0,T-h)$.
	
	Take some $\tau \in (0,T-h)$ and
	\begin{equation*}
		1-\delta = \max_{\Omega\times[0,\tau+h]} u.
	\end{equation*} 
	Then it is clear that $0\leq f^\prime(\xi(x,t)) \leq M_\delta$ for every $(x,t) \in \Omega \times [0,\tau+h]$.
	
	Now define $w_h (x,t) = e^{-M_\delta t} v_h(x,t)$ and suppose that there exists some time $t_0 \in [0,\tau+h]$ in which $w_h$ achieves a negative minimum at some $x_0\in\Omega$. Then
	\begin{equation*}
		0\geq (w_h)_t(x_0,t_0) = \mathcal{L} w_h(x_0,t_0) + (f^\prime(\xi(x,t))-M_\delta) w_h(x_0,t_0) \geq \mathcal{L} w_h(x_0,t_0) > 0,
	\end{equation*}
  	which leads to a contradiction and tells us that $w_h(x,t)\geq 0$ for every $(x,t)\in\Omega \times[0,\tau+h]$, which is equivalent to $v_h(x,t) \geq 0$ for every $(x,t)\in\Omega \times[0,\tau+h]$.
  	
  	Now, passing $h\to 0$, we obtain that $u_t(x,t) \geq 0$ for every $(x,t)\in\Omega \times[0,\tau]$. Since the argument is valid for every $\tau\in[0,T)$, the result is proved.
\end{proof}

Notice that Proposition \ref{prop.existence} only gives us Hölder continuity in the interior of our domain. We will now prove that, under a more restrictive class of initial data, our solution is also continuous up to the boundary, that is, $C_0(\Omega)$. This will be a crucial piece of information to prove Theorem \ref{teo.quenchingset}.

\begin{lemma} \label{lemma.boundarycontinuity}
	Let $u_0 \in H_0^1(\Omega) \cap C_0(\Omega)$ such that $u_0\geq 0$, and $u$ be the solution to system \eqref{1.1} with initial data $u_0$. Let $T$ be its maximal definition time. Then $u(\cdot,t) \in C_0(\Omega)$ for every $t\in[0,T)$.
\end{lemma}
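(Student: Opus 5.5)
The plan is to combine interior continuity, which comes from Proposition \ref{prop.existence}, with a barrier argument at the boundary. Fix $t_1<T$. By Proposition \ref{prop.existence}, $\delta:=1-\max_{\overline\Omega\times[0,t_1]}u>0$, or at least $\sup_{\Omega\times[0,t_1]}u<1$. Hence on $[0,t_1]$ the forcing $f(u)=\lambda(1-u)^{-p}$ is bounded by $M:=\lambda\delta^{-p}$. Corollary \ref{cor.positividad} gives $u\geq0$. Interior continuity in $x$ is already given by the $C^{2,\alpha}_{loc}$ regularity. What remains is to show that $u(x,t)\to0$ uniformly in $t\in[0,t_1]$ as $x\to\partial\Omega$.

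For the upper bound I compare $u$ with $\overline u=w+v$, where $w$ and $v$ do separate jobs.
\begin{itemize}
\item $w$ handles the forcing. Let $w\in C(\overline\Omega)$, $w=0$ outside $\Omega$, be the solution of $-\mathcal{L}w=M$ in $\Omega$. Since $\Omega$ is $C^{2,\gamma}$, regularity for the mixed operator (see \cite{BDVV2023}) should give $w\in C^{1,\beta}(\overline\Omega)$. If needed, I replace $w$ by $M$ times an explicit torsion-type supersolution built from the exterior ball condition, e.g.\ $\phi(x)=A(|x-z|^{-a}-\rho^{-a})_+$-type functions. Then $w_t-\mathcal{L}w=M\geq f(u)$.
\item $v$ handles the initial datum. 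Let $v$ solve the linear problem $v_t=\mathcal{L}v$ in $\Omega$, $v=0$ outside, $v(\cdot,0)=u_0$.
\end{itemize}
Then $\overline u-u$ satisfies $(\overline u-u)_t-\mathcal{L}(\overline u-u)\geq0$, vanishes outside $\Omega$, and is $\geq0$ at $t=0$. Lemma \ref{lemma.maximumprinciple} therefore gives $0\leq u\leq w+v$ on $\Omega\times[0,t_1]$. Since $w$ vanishes continuously on $\partial\Omega$, it suffices to show that $v(\cdot,t)\to0$ at $\partial\Omega$ uniformly in $t\in[0,t_1]$.

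For $v$, I use $u_0\in C_0(\Omega)$. Given $\varepsilon>0$, approximate $u_0$ from above by $g_\varepsilon\in C_c^\infty(\Omega)$ with $u_0\leq g_\varepsilon+\varepsilon$ on $\overline\Omega$. Let $z_\varepsilon$ be the linear solution with datum $g_\varepsilon$. Then $\mathcal{L}g_\varepsilon$ is bounded on $\Omega$: the Laplacian part is smooth, and the fractional part is bounded because $g_\varepsilon\in C_c^\infty$. So $g_\varepsilon+tK\,\phi_0$, where $\phi_0$ is the torsion barrier ($-\mathcal{L}\phi_0\geq1$, $\phi_0=0$ on $\partial\Omega$) and $K$ is a large constant, is a supersolution for $z_\varepsilon$. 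The bounded constant $\varepsilon$ is itself a supersolution of the linear problem, since $\mathcal{L}\varepsilon=-(-\Delta)^s\varepsilon\leq0$ when the constant is extended by $0$ outside. Hence $\varepsilon$ dominates the remaining part, and $v\leq g_\varepsilon+t_1K\phi_0+\varepsilon$. Near $\partial\Omega$ both $g_\varepsilon$ and $\phi_0$ vanish, so $\limsup_{x\to\partial\Omega}\sup_{t\leq t_1}v\leq\varepsilon$. Letting $\varepsilon\to0$ gives the claim.

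\textbf{Main obstacle.} The delicate step is the barrier $\phi_0$: a function continuous up to $\partial\Omega$, vanishing there, with $-\mathcal{L}\phi_0\geq1$ in $\Omega$, handling the nonlocal part. I would first try to take $\phi_0$ as the torsion function of the mixed operator, quoting boundary regularity from \cite{BDVV2023}. Failing that, I would build it by hand from the exterior ball condition. In that construction, $\Delta$ dominates near the boundary. The nonlocal term of a bounded, Lipschitz-at-the-boundary function contributes only a bounded error, which is absorbed by scaling the local part. A secondary technical point is that Lemma \ref{lemma.maximumprinciple} requires interior regularity of the comparison functions. These are supplied by interior parabolic regularity for the linear problems involved, or the comparison can be done in the weak (energy) sense, testing with $(\overline u-u)^-\in H_0^1$.
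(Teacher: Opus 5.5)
Your route differs from the paper's, and it works after one repair. The paper uses a single explicit stationary barrier. For a regularized distance $d$, $(-\Delta)^s d^s$ is bounded on $\overline\Omega$, while $\Delta d^s\approx s(s-1)d^{s-2}\to-\infty$. So on a thin strip $\Omega_\eta$ the local term absorbs both the nonlocal term and the forcing bound $\lambda\delta^{-p}$. Then $\eta^{-s}d^s\ge 1\ge u$ off the strip, and one comparison on $\Omega_\eta\times[0,T']$ gives $u\le\eta^{-s}d^s$. You instead dominate $u$ by $M\phi_0+e^{t\mathcal L}u_0$ and handle the linear flow by approximating the datum.

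\textbf{The step that fails.} Your barrier $g_\varepsilon+tK\phi_0$ for $z_\varepsilon$ is not a supersolution. Applying $\partial_t-\mathcal L$ gives $K\phi_0-\mathcal Lg_\varepsilon-tK\mathcal L\phi_0$. Take $x\in\Omega\setminus\mathrm{supp}\,g_\varepsilon$ near $\partial\Omega$. There $\mathcal Lg_\varepsilon(x)$ is a positive multiple of $\int g_\varepsilon(y)|x-y|^{-n-2s}\,dy$, hence bounded below by a positive constant when $0\le g_\varepsilon\not\equiv0$. Meanwhile $K\phi_0(x)\to0$, so the expression is negative at such $x$ for all small $t>0$.

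The fix is to drop the $t$. Take $K\ge\sup_\Omega|\mathcal Lg_\varepsilon|$. The stationary function $g_\varepsilon+K\phi_0$ satisfies $-\mathcal L(g_\varepsilon+K\phi_0)\ge K-\mathcal Lg_\varepsilon\ge0$. It also dominates $g_\varepsilon$ at $t=0$ and vanishes outside $\Omega$. Hence $v\le g_\varepsilon+K\phi_0+\varepsilon$, and the rest of your argument goes through.

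\textbf{The fallback barrier.} This also needs correcting, in two ways:
\begin{enumerate}
\item The sign is reversed; it should be $(\rho^{-a}-|x-z|^{-a})_+$.
\item A barrier with a Lipschitz kink at the boundary is not a supersolution of $-\mathcal L$ when $s\ge1/2$. Its fractional Laplacian behaves like $-c\,\mathrm{dist}^{1-2s}$ near the kink (like $-c\log(1/\mathrm{dist})$ if $s=1/2$) and tends to $-\infty$, while its Laplacian stays bounded. So the nonlocal part is not a bounded error there.
\end{enumerate}
What works is a barrier of the paper's kind, with infinite slope at the boundary ($d^s$, or $d^\beta$ with $\beta<1$). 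Its Laplacian blows up like $d^{\beta-2}$ and dominates. Since your main route cites boundary regularity of the mixed torsion function (as the paper does for $\varphi_1$), this only affects the fallback.

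\textbf{What each approach buys.} The paper's argument is short, explicit and, when it applies, quantitative ($u\le\eta^{-s}d^s$). But its comparison on $\Omega_\eta$ also needs $\eta^{-s}d^s\ge u_0$ on $\Omega_\eta$ at $t=0$. That is not checked in the paper, and it requires $u_0=O(d^s)$ at $\partial\Omega$. For a general $u_0\in C_0(\Omega)$ one would have to add a constant $\varepsilon$ to the barrier and shrink the strip to where $u_0\le\varepsilon$.

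Your splitting is exactly where the hypothesis $u_0\in C_0(\Omega)$ enters, through the $C_c^\infty$ approximation and the $L^\infty$ bound of the linear flow. So it treats general continuous data directly. The price is reliance on boundary continuity of the torsion function. You also need comparisons with functions that are not classical at $t=0$ or not in $H_0^1$ (such as $\varepsilon\chi_\Omega$). As you say, these are best done in weak form with $(\cdot)^-$ test functions.
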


\begin{proof}
	Fix some $T^\prime \in [0,T)$, and define $1-\delta = \|u\|_{L^\infty(\Omega\times[0,T^\prime])}$. Then 
	\[
		f(u(x,t)) = \lambda (1-u(x,t))^{-p} 
		\leq \lambda \delta^{-p}
	\] for every $(x,t) \in \Omega \times[0,T^\prime]$.
	
	Now take some small $\eta^\prime \in (0,1)$ and define 
	\[
		\Omega_{\eta^\prime} = \{x\in\Omega: \text{dist}(x,\mathbb{R}^n\setminus\Omega) <  \eta^\prime\}. 
	\]
	Next, consider a function $d(x)$ with $x\in\mathbb{R}^n$ such that $d\in C^{2,\gamma}(\overline\Omega)$, $d(x)=0$ for every $x\in\mathbb{R}^n\setminus\Omega$, $d(x) = \text{dist}(x, \mathbb{R}^n \setminus \Omega)$ for every $x\in \Omega_{\eta^\prime}$ and $d(x) \geq \text{dist}(x, \mathbb{R}^n \setminus \Omega)$ for every $x\in \Omega \setminus \Omega_{\eta^\prime}$. According to Proposition 9.1 of \cite{RS2016}, the quantity $(-\Delta)^s (d^s)(x)$ is bounded for every $x\in\overline\Omega$. Additionally, $d^s(x)$ is a concave function in the boundary neighbourhood, and it is easy to see that $\Delta (d^s)(x)$ is going to be negative and rapidly approach $-\infty$ as $x\to \partial\Omega$. Knowing this, take $\eta \in (0,\eta^\prime)$ small enough so that the function $d^s$ satisfies:
	\begin{equation*}
		\Delta d^s(x) \leq -\|(-\Delta)^s d^s\|_{C^{2,\gamma}(\overline\Omega)} - \lambda \delta^{-p},
	\end{equation*}
	for every $x \in \Omega_\eta= \{x\in\Omega: \text{dist}(x,\mathbb{R}^n\setminus\Omega) < \eta\}$. 
	
	Now define $w(x) = \eta^{-s} d^s(x)$ for every $x\in\mathbb{R}^n$. It is clear that $w(x)=0$ for every $x\in\mathbb{R}^n$. Furthermore, it is also true that, for any $x\in\Omega \setminus \Omega_\eta$:
	\begin{equation*}
		w(x) = \eta^{-s} d^s(x) \geq \eta^{-s} \eta^s = 1,
	\end{equation*}
	which means that $w(x)\geq u(x,t)$ for every $x\in (\Omega \setminus \Omega_\eta) \times [0,T^\prime]$. Finally, we conclude that, for every $x\in \Omega_\eta$ and $t\in[0,T^\prime]$,
	\begin{align*}
		\Delta w(x) - (-\Delta)^s w(x) &\leq \eta^{-s}( -\|(-\Delta)^s d^s\|_{C^{2,\gamma}(\overline\Omega)} - \lambda \delta^{-p} + \|(-\Delta)^s d^s\|_{C^{2,\gamma}(\overline\Omega)}) \\
		& \leq - \eta^{-s} f(u(x,t)) \leq \Delta u(x,t) - (-\Delta)^s u(x,t) - u_t(x,t).
	\end{align*}
	Then we could define $z(x,t) = w(x)-u(x,t)$ for every $(x,t) \in \mathbb{R}^n \times [0,T^\prime]$, which is under the conditions of Lemma \ref{lemma.maximumprinciple}. This lets us conclude that $\eta^{-s} d^s(x) \geq u(x,t)$ for every $(x,t) \in \Omega_\eta \times[0,T^\prime]$. This, together with the fact that $u(\cdot,t)\in C^{2,\alpha}_{loc}(\Omega)$ for every $t\in[0,T^\prime]$ from Proposition \ref{prop.existence}, tells us that $u(\cdot,t) \in C_0(\Omega)$ for every $t\in[0,T^\prime]$. Since this is valid for any $T^\prime < T$, then $u(\cdot,t) \in C_0(\Omega)$ for every $t\in[0,T)$ and the result is proved.
\end{proof}

\begin{remark}
	Notice that the barrier we have constructed for this result is $\eta^{-s} d(x)$, and the constant $\eta^{-s}$ depends drastically on the choice of $T^\prime$. Therefore, this constant could explode to $+\infty$ as $t\to T$, which would allow the solution to present quenching at a boundary point. We will prove this is not possible in Theorem \ref{teo.quenchingset}.
\end{remark}
	
	\subsection{Existence and regularity of odd eigenfunctions}

Next, we are interested in replicating the results of \cite{BDVV2023} to find an odd eigenfunction of $-\mathcal{L}$ in a symmetric domain, which will be a crucial ingredient in applications of the method of moving planes stated in Theorem \ref{teo.movingplanes}.

First, let us define the symmetry planes with which we will work throughout the rest of the paper. Let $\Psi$ be a hyperplane in $\mathbb{R}^n$. Without loss of generality, take some $l\in\mathbb{R}$ and assume that
    \begin{equation*}
        \Psi = \{x=(x_1,x^\prime) \in \mathbb{R}^n : x_1 = l\}.
    \end{equation*}
    Let
    \begin{equation*}
        \hat{x} = (2l-x_1,x^\prime), \;\;\; H = \{x=(x_1,x^\prime)\in\mathbb{R}^n : x_1 < l\}, \;\;\; \widehat{H} = \{x\in\mathbb{R}^n : \hat{x}\in H\},
    \end{equation*}    
    which are the reflection of $x$ with respect to $\Psi$, the left half-space and the right half-space respectively.

\begin{lemma} \label{lemma.oddeigenf}
	Consider a bounded domain $\Sigma \subset \mathbb{R}^n$ with $C^{1,\beta}$ boundary and symmetric with respect to $\Psi$. Define $\Sigma^- = \Sigma \cap H$, $\Sigma^+ = \Sigma \cap \widehat{H}$, and consider the space
	\begin{equation*}
		X_{odd} = \{ u_{|\Sigma} \in H_0^1 (\Sigma) : u \equiv 0 \; \text{ in } \; \mathbb{R}^n \setminus \Sigma, \;\;\; u(x) = -u(\hat{x}) \; \text{ in } \; \Sigma^- \}.
	\end{equation*}
	
	We define the principal odd Dirichlet eigenvalue of $-\mathcal{L}$ in $\Sigma$ as 
	\begin{equation} \label{eq.oddeigenv}
		\mu_{odd} = \inf \left\{ \frac{\mathcal{E}(u,u)}{\|u\|_{L^2(\Sigma)}^2} : u \in X_{odd} \setminus \{0\} \right\} \in [0,\infty).
	\end{equation}
	
	Then there exists some $\varphi_{odd} \in X_{odd} \setminus \{0\}$ such that 
	\[
		\|\varphi_{odd}\|_{L^2(\Sigma)} = 1, \, 
		\mu_{odd} = \mathcal{E}(\varphi_{odd},\,
		\varphi_{odd})>0,\text{ and }
		\varphi_{odd} < 0
		\text{ in }\Sigma^-.
	\] 
	Furthermore, $\varphi_{odd} \in C_b(\mathbb{R}^n) \cap C^{2,\alpha}(\Sigma) \cap C^{1,\beta}(\overline\Sigma)$ and it is a classical solution of the problem
	
	\begin{equation} \label{eq.oddeigenfunction}
    \left\{
    \begin{array}{ll}
        -\mathcal{L}\varphi_{odd}(x) = \mu_{odd} \varphi_{odd}(x), & x\in \Sigma,\\
        \varphi_{odd}(x)=0, & x\in \mathbb{R}^n \setminus \Sigma,\\
        \varphi_{odd}(x) = -\varphi_{odd}(\hat{x}), & x \in \Sigma^-.
    \end{array}
    \right.
    \end{equation}
\end{lemma}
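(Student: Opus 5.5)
We obtain $\varphi_{odd}$ by the direct method of the calculus of variations on the closed subspace $X_{odd}$. Then we use the Euler--Lagrange equation, tested against odd functions, and a symmetrization argument to show that it solves \eqref{eq.oddeigenfunction} against all test functions. Finally, we use a ``$-|u|$'' trick on $\Sigma^-$ together with a strong maximum principle to get the sign.

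\emph{Existence.} $X_{odd}$ is a closed subspace of $H_0^1(\Sigma)$: the reflection $u\mapsto u(\hat\cdot)$ is an isometry of $H_0^1(\Sigma)$, since $\Sigma$ is symmetric. The form $\mathcal{E}$ is coercive on $H_0^1(\Sigma)$, because $\mathcal{E}(u,u)\ge\|\nabla u\|_2^2$. Take a minimizing sequence normalized in $L^2(\Sigma)$. It is bounded in $H_0^1$, so by Rellich it converges (along a subsequence) weakly in $H_0^1$ and strongly in $L^2$. By weak lower semicontinuity of both parts of $\mathcal{E}$ (Fatou for the Gagliardo part), the limit $\varphi_{odd}\in X_{odd}$ attains $\mu_{odd}$ with $\|\varphi_{odd}\|_2=1$. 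Poincaré gives $\mu_{odd}\ge\lambda_1(-\Delta,\Sigma)>0$.

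\emph{Euler--Lagrange equation.} First-order minimality gives $\mathcal{E}(\varphi_{odd},\psi)=\mu_{odd}\int\varphi_{odd}\psi$ for all $\psi\in X_{odd}$. For a general $\psi\in H_0^1(\Sigma)$, write $\psi=\psi_o+\psi_e$ with $\psi_o(x)=\tfrac12(\psi(x)-\psi(\hat x))$ and $\psi_e$ the even part. Both $\mathcal{E}$ and the $L^2$ product are invariant under the reflection, because the kernel satisfies $|\hat x-\hat y|=|x-y|$. Hence $\mathcal{E}(\varphi_{odd},\psi_e)=0=\int\varphi_{odd}\psi_e$, and $\varphi_{odd}$ is a weak Dirichlet eigenfunction of $-\mathcal{L}$ in $\Sigma$. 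The regularity $\varphi_{odd}\in C_b(\mathbb{R}^n)\cap C^{1,\beta}(\overline\Sigma)\cap C^{2,\alpha}(\Sigma)$ then follows as in \cite{BDVV2023}: an $L^\infty$ bound by Moser/De Giorgi iteration, global $C^{1,\beta}$ regularity for mixed operators, and interior Schauder estimates, treating $(-\Delta)^s\varphi_{odd}$ as a Hölder right-hand side. In particular $\varphi_{odd}$ is a classical solution.

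\emph{Sign.} Replace $\varphi_{odd}$ by $w$, defined as $-|\varphi_{odd}|$ on $\Sigma^-$, as $|\varphi_{odd}|$ on $\Sigma^+$, and $0$ elsewhere. This $w$ lies in $X_{odd}$ with the same $L^2$ norm. The local term is unchanged. For the nonlocal term, split $\mathbb{R}^n\times\mathbb{R}^n$ into pairs of points on the same side and on opposite sides of $\Psi$. Using oddness, the Gagliardo energy can be rewritten as
\[
\iint_{H\times H}\Big[(u(x)-u(y))^2\Big(\tfrac{1}{|x-y|^{n+2s}}+\tfrac{1}{|x-\hat y|^{n+2s}}\Big)+4u(x)u(y)\tfrac{1}{|x-\hat y|^{n+2s}}\Big]dxdy
\]
plus positive tail terms involving $u^2$ only. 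Since $|x-\hat y|\ge|x-y|$ for $x,y\in H$, replacing $u$ by $-|u|$ does not increase the energy. This requires the elementary inequality $(a-b)^2(K_1+K_2)+4abK_2\ge(|a|-|b|)^2(K_1+K_2)+4|a||b|K_2$ whenever $K_1\ge K_2$. Thus $w$ is also a minimizer, hence also a classical solution, with $w\le0$ in $\Sigma^-$. We rename $\varphi_{odd}:=w$.

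For strict negativity, apply the antisymmetric strong maximum principle: Theorem \ref{teo.movingplanes} with time-independent $u=-w$ and $c=\mu_{odd}$ on $D=\Sigma^-$. Alternatively, use its elliptic version directly. At an interior zero $x_0\in\Sigma^-$ we have $\Delta w(x_0)\le0$, while
\[
-(-\Delta)^s w(x_0)=\int_H w(y)\Big(\tfrac{1}{|x_0-y|^{n+2s}}-\tfrac{1}{|x_0-\hat y|^{n+2s}}\Big)dy<0
\]
unless $w\equiv0$ on $H$, which is excluded. Since $\mu_{odd} w(x_0)=0$, the equation then gives $0=-\mathcal{L}w(x_0)>0$, a contradiction.

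The main obstacle is the energy comparison for $-|u|$ in the nonlocal term, which is where the reflection structure of the kernel is essential. The regularity step is mostly citation of \cite{BDVV2023}.
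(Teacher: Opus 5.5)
Your proposal is correct and follows essentially the same route as the paper: minimizing the Rayleigh quotient over $X_{odd}$, replacing the minimizer by $-|\varphi_{odd}|$ on $\Sigma^-$ (the paper's $\tilde{e}$) via the same splitting of the Gagliardo energy and the comparison $|x-y|<|x-\hat y|$, taking regularity from \cite{BDVV2023}, and reaching the same pointwise contradiction at an interior zero in $\Sigma^-$; your odd/even decomposition also supplies a step the paper leaves implicit, namely that the constrained minimizer satisfies the weak equation against all of $H_0^1(\Sigma)$. For strict negativity, use your direct elliptic argument rather than Theorem \ref{teo.movingplanes}, because the paper's proof of that theorem itself relies on $\varphi_{odd}$ from this lemma, so citing it here would be circular.
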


\begin{proof}
	Notice that $X_{odd}$ is a closed subspace of $H_0^1(\Sigma)$, so we can follow the proof of \cite[Proposition 9]{SV2013} to get that the infimum in \eqref{eq.oddeigenv} is achieved and therefore $\mu_{odd}$ is an eigenvalue with an associated eigenfunction $e_{odd} \in X_{odd}$ with $\|e_{odd}\|_{L^2(\Sigma)}=1$.
	
	Suppose then that $\mu_{odd} = 0$. Then,
	\begin{equation*}
		\mathcal{E}(e_{odd}, e_{odd}) = \frac{C_{n,s}}{2} \iint_{\mathbb{R}^n \times \mathbb{R}^n} \frac{|e_{odd}(x)-e_{odd}(y)|^2}{|x-y|^{n+2s}} dy dx + \int_\Omega |\nabla e_{odd}(x)|^2 dx = 0,
	\end{equation*}	
	which means that $e_{odd}(x)$ is constant a.e. $x\in\mathbb{R}^n$. But since $e_{odd} \in X_{odd}$, we know that $e_{odd}(x) = 0$ a.e. $x\in \mathbb{R}^n \setminus \Sigma$, which in turn gives us that $e_{odd} \equiv 0$, which is a contradiction with the fact that $\|e_{odd}\|_{L^2(\Sigma)}=1$. Then we conclude that $\mu_{odd}>0$.
	
	 Let us now prove that we can choose this eigenfunction so that $e_{odd}\leq 0$ in $\Sigma^-$. We claim that, if $e \in X_{odd}$ is an eigenfunction related to $\mu_{odd}$ with $\|e\|_{L^2(\mathbb{R}^n)} = 1$, then both $e$ and $\tilde{e}=sign(x_1-a)|e|$ attain the minimum in \eqref{eq.oddeigenv}, and either $e\geq 0$ or $e\leq 0$ in $\Sigma^-$.
	
	Indeed, we know that
	\begin{equation*}
		\mathcal{E}(e,e) = \int_\Sigma |\nabla e|^2 dx + \frac{C_{n,s}}{2} \iint_{\mathbb{R}^n} \frac{(e(x)-e(y))^2}{|x-y|^{n+2s}} dx dy = \mu_{odd} = \mathcal{E}(e_{odd},e_{odd}),
	\end{equation*}
	and, for any $v\in X_{odd}$,
	\begin{align*}
		\mathcal{E}(v,v) =& \; \int_\Sigma |\nabla v|^2 dx + \frac{C_{n,s}}{2}\iint_{\mathbb{R}^n} \frac{(v(x)-v(y))^2}{|x-y|^{n+2s}} dx dy \\
		=& \;  \int_\Sigma |\nabla v|^2 dx + \frac{C_{n,s}}{2} \left( 2 \int_{\Sigma} \int_{\mathbb{R}^n\setminus\Sigma} \frac{v(y)^2}{|x-y|^{n+2s}} dx dy + \int_\Sigma \int_\Sigma \frac{(v(x)-v(y))^2}{|x-y|^{n+2s}} dx dy \right) \\
		=& \;  \int_\Sigma |\nabla v|^2 dx + C_{n,s} \int_{\Sigma} \int_{\mathbb{R}^n\setminus\Sigma} \frac{v(y)^2}{|x-y|^{n+2s}} dx dy + \frac{C_{n,s}}{2} \mathcal{I}(v).
	\end{align*}
	For the integral $\mathcal{I}(v)$ we know that, thanks to the symmetry,
	\begin{align*}
	\mathcal{I}(v) =& \; \int_{\Sigma^+} \int_{\Sigma^+} \frac{(v(x)-v(y))^2}{|x-y|^{n+2s}} dx dy + \int_{\Sigma^-} \int_{\Sigma^-} \frac{(v(x)-v(y))^2}{|x-y|^{n+2s}} dx dy \\
	&+ \int_{\Sigma^+} \int_{\Sigma^-} \frac{(v(x)-v(y))^2}{|x-y|^{n+2s}} dx dy + \int_{\Sigma^-} \int_{\Sigma^+} \frac{(v(x)-v(y))^2}{|x-y|^{n+2s}} dx dy \\
	=& \; 2\int_{\Sigma^-} \int_{\Sigma^-} \frac{(v(x)-v(y))^2}{|x-y|^{n+2s}} dx dy + 2\int_{\Sigma^-} \int_{\Sigma^+} \frac{(v(x)-v(y))^2}{|x-y|^{n+2s}} dx dy \\
	=& \; 2\int_{\Sigma^-} \int_{\Sigma^-} \left( \frac{(v(x)-v(y))^2}{|x-y|^{n+2s}} + \frac{(v(x)+v(y))^2}{|x-\hat{y}|^{n+2s}} \right) dx dy \\
	=& \; 2\int_{\Sigma^-} \int_{\Sigma^-} (v(y)^2+v(x)^2) \left( \frac{1}{|x-y|^{n+2s}} + \frac{1}{|x-\hat{y}|^{n+2s}} \right) dxdy\\
	&+  4 \int_{\Sigma^-} \int_{\Sigma^-} v(x)v(y)\left( \frac{-1}{|x-y|^{n+2s}} + \frac{1}{|x-\hat{y}|^{n+2s}} \right) dx dy.
	\end{align*}
	Then, since both $e$ and $\tilde{e}$ are functions in $X_{odd}$, $e(x)^2 = \tilde{e}(x)^2$ and $|\nabla e(x)|^2=|\nabla \tilde{e}(x)|^2$ for every $x\in\Sigma$, and  $|x-y|<|x-\hat{y}|$ for every $x,y \in \Sigma^-$,
	\begin{align*}
		\mathcal{E}(\tilde{e},\tilde{e}) &- \mathcal{E}(e,e) \\
		 =& \; 2 C_{n,s} \int_{\Sigma^-} \int_{\Sigma^-} (\tilde{e}(x)\tilde{e}(y) - e(x)e(y)) \left( \frac{-1}{|x-y|^{n+2s}} + \frac{1}{|x-\hat{y}|^{n+2s}} \right) dx dy \\
		=& \; 2 C_{n,s} \int_{\Sigma^-} \int_{\Sigma^-} (|e(x)||e(y)| - e(x)e(y)) \left( \frac{-1}{|x-y|^{n+2s}} + \frac{1}{|x-\hat{y}|^{n+2s}} \right) dx dy \\
		\leq& \; 0,
	\end{align*}
	and the inequality is strict if both $\{e>0\} \cap \Sigma^-$ and $\{e<0\} \cap \Sigma^-$ have positive measure. Finally, the fact that $\tilde{e}\in X_{odd}$, $\|\tilde{e}\|_{L^2(\Sigma)} = \|e\|_{L^2(\Sigma)} =1$ and $e_{odd}$ is a minimizer of $\mathcal{E}(\cdot,\cdot)$, we conclude that $\mathcal{E}(\tilde{e},\tilde{e}) = \mathcal{E}(e,e) = \mathcal{E}(e_{odd},e_{odd})$ and either $e\geq 0$ or $e\leq 0$ in $\Sigma^-$. This proves the claim and lets us conclude that we can choose the odd eigenfunction $e_{odd}$ such that $e_{odd} \leq 0$ in $\Sigma^-$.
	
	For the regularity result and the proof that the eigenfunction is a classical solution of \eqref{eq.oddeigenfunction}, we refer to \cite[Theorems 2.7 and 3.5]{BDVV2023} and \cite[Theorem 1.5]{SVWZ2025}.
	
	Finally, we state that this choice of $e_{odd}$ satisfies $e_{odd}<0$ in $\Sigma^-$. Indeed, suppose that there exists a point $x_0\in\Sigma^-$ such that $e_{odd}(x_0)=0$. Then it is clear that $\Delta e_{odd}(x_0) \leq 0$ and, following a symmetry argument similar to the one done before, $(-\Delta)^s e_{odd} (x_0) \geq 0$, and the inequality is strict unless $e_{odd}\equiv 0$. However, that cannot be true because $\|e_{odd}\|_{L^2(\Sigma)}=1$, so the inequality is actually strict. Then plugging this into \eqref{eq.oddeigenfunction}, we get
	\begin{equation*}
		0 < - \mathcal{L} e_{odd} (x_0) = \mu_{odd} e_{odd}(x_0) = 0,
	\end{equation*}
	which is a contradiction.
\end{proof}
    
    \section{Parabolic moving planes} \label{sec.movingplanes}
   Our first main objective is to extend the method of moving planes presented in \cite{CLL2016} for a parabolic system involving our mixed operator $\mathcal{L}$. Traditionally, to apply the maximum principle or comparison results to a system with a nonlocal operator on a domain $D\subset \mathbb{R}^n$, we are forced to impose a positivity condition on the whole complement $\mathbb{R}^n\setminus D$. Notice that we are indeed using such condition both in Lemma \ref{lemma.maximumprinciple} and Lemma \ref{lema.comparacion}. The method of moving planes takes advantage of the symmetry of the kernel $|x-y|^{-n-2s}$ to relax this hypothesis.

\begin{proof}[Proof of Theorem \ref{teo.movingplanes}]
	Assume, without loss of generality, that $\tau=0$. Take some $T^\prime \in (0,T)$ and $M>0$ such that $\|C\|_{L^\infty([0,T^\prime])} \leq M$. Then define, for every $(x,t)\in \mathbb{R}^n \times [0,T^\prime]$,
	\begin{equation} \label{integrating.factor}
	v(x,t) = e^{-Mt} u(x,t).
	\end{equation}
    Notice that the regularity of $v$ is the same as that of $u$. Furthermore, it also satisfies that $v(x,0)\geq 0$ for every $x\in D$, $v(x,t) \geq 0$, for every $(x,t) \in (H\setminus D) \times[0,T^\prime]$, and $v(x,t)=-v(\hat{x},t)$ for every $(x,t) \in H\times[0,T^\prime]$. Finally, using \eqref{cond.movingplanes}, it is easy to see that:
    \begin{align} \label{eq.integratingfactor}
    	v_t(x,t) &= e^{-Mt} u_t(x,t) - M e^{-Mt} u(x,t) \\
    	&\geq - (-\Delta)^s v(x,t) + \Delta v(x,t) + (c(x,t) - M) v(x,t), \nonumber
    \end{align}
    for every $(x,t)\in D\times(0,T^\prime]$. Now suppose that there exists some $(x_0,t_0)\in D\times(0,T^\prime]$ such that
    \begin{equation*}
    	v(x_0,t_0) = \min_{\overline{D}} v(\cdot,t_0) < 0,
    \end{equation*}
    and, without loss of generality,
    \begin{equation*}
    	v_t(x_0,t_0) < 0, \;\;\; \Delta v(x_0,t_0) \geq 0.
    \end{equation*}
    We also claim that $(-\Delta)^s v(x_0,t_0) < 0$. Indeed, taking advantage of the symmetry:
    
    \begin{align*}
    	(-\Delta)^s & v(x_0,t_0)= C_{n,s} P.V. \int_{\mathbb{R}^n} \frac{v(x_0,t_0)-v(y,t_0)}{|x_0-y|^{n+2s}} dy \\
    	&= C_{n,s} P.V. \left\{\int_H \frac{v(x_0,t_0)-v(y,t_0)}{|x_0-y|^{n+2s}} dy + \int_{\widehat{H}} \frac{v(x_0,t_0)-v(y,t_0)}{|x_0-y|^{n+2s}} dy \right\} \\
    	&= C_{n,s} P.V. \left\{\int_H \frac{v(x_0,t_0)-v(y,t_0)}{|x_0-y|^{n+2s}} dy + \int_{H} \frac{v(x_0,t_0)-v(\hat{y},t_0)}{|x_0-\hat{y}|^{n+2s}} dy \right\} \\
    	&= C_{n,s} P.V. \left\{\int_H \frac{v(x_0,t_0)-v(y,t_0)}{|x_0-y|^{n+2s}} dy + \int_{H} \frac{v(x_0,t_0)+v(y,t_0)}{|x_0-\hat{y}|^{n+2s}} dy \right\} \\
    	&\leq C_{n,s} P.V. \int_H \frac{v(x_0,t_0)-v(y,t_0) + v(x_0,t_0) + v(y,t_0)}{|x_0-\hat{y}|^{n+2s}} dy <0.
    \end{align*}
    Plugging all these into \eqref{eq.integratingfactor}, we get:
    \begin{align*}
    	0 > v_t(x_0,t_0) &\geq -(-\Delta)^s v(x_0,t_0) + \Delta v(x_0,t_0) + (c(x_0,t_0) - M) v(x_0,t_0) \\
    	&> (c(x_0,t_0) - M) v(x_0,t_0) > 0,
    \end{align*}
    which is a contradiction. Therefore, we conclude that $v(x,t) \geq 0$ for every $(x,t)\in D \times[0,T^\prime]$. Following \eqref{integrating.factor}, this means that $u(x,t) \geq 0$ for every $(x,t)\in D \times[0,T^\prime]$. Since this is valid for every $T^\prime \in (0,T)$, it is clear that $u(x,t) \geq 0$ for every $(x,t)\in D \times[0,T)$.
    
    Finally, if there exists some $(x_1,t_1)\in D\times[0,T)$ such that $v(x_1,t_1)=0$, we can assume without loss of generality that 
    \begin{equation*}
    	v_t(x_1,t_1) \leq 0, \;\;\; \Delta v(x_1,t_1) \geq 0.
    \end{equation*}
    Using \eqref{eq.integratingfactor} again, we get
    \begin{equation*}
    	0 \leq (-\Delta)^s v(x_1,t_1) = C_{n,s} P.V. \int_{\mathbb{R}^n} \frac{-v(y,t_1)}{|x_1-y|^{n+2s}} dy,
    \end{equation*}
    from which we derive immediately that $v(\cdot, t_1) \equiv 0$ in $\mathbb{R}^n$. Then define the quantity
    \begin{equation*}
    	w(t) = \int_D v(x,t) \varphi_{odd}(x) dx,
    \end{equation*}
    where $\varphi_{odd}$ is the principal odd eigenfunction of $-\mathcal{L}$ in $D\cup \tilde{D}$ as defined in Lemma \ref{lemma.oddeigenf}, where $\tilde{D}$ is the reflection of $D$ with respect to $\Psi$. We choose $\varphi_{odd}<0$ in $D$. Then it is clear that $w(t) \leq 0$ for every $t\in[0,T)$, and that
    \begin{equation*}
    	w^\prime (t) = \int_D v_t(x,t) \varphi_{odd}(x) dx \le \int_D (\mathcal{L} v(x,t) + (c(x,t)-M) v(x,t))\varphi_{odd}(x) dx.
    \end{equation*}
    Then, using the fact that \(c_0(t)\le c(x,t)\) 
    uniformly in  \(D\) and
    $v(x,t)\varphi_{odd}(x) \leq 0$, 
    we get
    \begin{align*}
    	w^\prime (t) \leq& \; 
    	\int_D (\mathcal{L} v(x,t) - (M-c_0(t)) v(x,t)) \varphi_{odd}(x) dx \\
    	=& \; -\mu_{odd} w(t) - K w(t),
    \end{align*}
    where 
    $K = \sup \{M-c_0(t)\colon t\in [0,t_1]\}<0$ due to
    \(c_0\in L^\infty_{loc}([\tau,T)).\)
    Thus
    \begin{equation*}
    	z(t) = w(t) e^{(\mu_{odd} + K)t},
    \end{equation*}
    satisfies
    \begin{equation*}
    	z^\prime(t) \leq 0
    \end{equation*}
    for every $t\in[0,t_1]$. Then $z$ is a nonincreasing function and, together with the fact that $z(t)\leq 0$ for every $t\in[0,T)$ and
    \begin{equation*}
    	z(t_1) = e^{(\mu_{odd} + K)t_1} \int_D v(x,t_1) \varphi_{odd}(x) dx = 0,
    \end{equation*}
    we conclude that $z(t) = 0$ for every $t\in[0,t_1]$. Then $w(t) = 0$ for every $t\in[0,t_1]$, and $v(x,t) \equiv 0$ for every $(x,t) \in D \times [0,t_1]$. Recalling the definition of $v$, we conclude that
    \begin{equation*}
    	u(x,t) \equiv 0,
    \end{equation*}
    for every $(x,t) \in D \times [0,t_1]$.
\end{proof}

\begin{remark}
	Notice that if we consider the operator $\mathcal{L}_{a,b} = -a(-\Delta)^s + b\Delta$ with $a>0, b\geq 0$ instead of $\mathcal{L}$ in Theorem \ref{teo.movingplanes}, the proof is still valid and the result holds.
	
	In particular, the result holds if we consider just the fractional laplacian $-(-\Delta)^s$ as the operator.
\end{remark}

\begin{remark}
	Note that we could consider the bound function $C(t)$ with even less regularity than what was asked in the statement of Theorem \ref{teo.movingplanes}. If we ask for $C\in L^1_{loc}([\tau,T))$, the result is still valid but the proof gets substantially more complicated: we would need to define $v(x,t) = e^{-\int_0^t C(s) ds} u(x,t)$, which loses the continuity of its derivative $v_t$. Then we would need to work with the Dirichlet form associated to our problem \eqref{1.1}, develop a weak analogue of our reasoning and, most importantly, prove that
	\begin{equation*}
		\int_{\mathbb{R}^n} \int_{\mathbb{R}^n} \frac{(v(x,t_0)-v(y,t_0))((v^-\chi_D)(x,t_0) - (v^-\chi_D(y,t_0))}{|x-y|^{n+2s}} dy dx \leq 0.
	\end{equation*}
	The ideas to prove this fact are the same as the ones used in our previous proof to show that $(-\Delta)^s v(x_0,t_0) \leq 0$, taking advantage of the symmetry of $u$. However, the calculations are much longer and more tedious.
\end{remark}

\section{Behaviour of the quenching phenomena} \label{sec.quenching}

First, we prove that if the parameter $\lambda$ is big enough, then every single solution of \eqref{1.1} presents quenching in finite time.

\begin{proof}[Proof of Proposition \ref{prop.todoquenchea}]
    Let $u$ be a solution of \eqref{1.1} with parameter $\lambda$ and initial datum $u_0\equiv 0$. Take $T$ as its maximal definition time. Then define the following quantity:
    \begin{equation*}
        F(t) = \int_\Omega u(x,t)\varphi_{1}(x)dx.
    \end{equation*}
Note that, by Lemma \ref{lema.velocidadpositiva}, $F(0)=0\leq F(t) \leq 1$ for every $t\in[0,T)$. Integrating by parts and using the fact that $\mathcal{L}$ is a self-adjoint operator in $L^2(\Omega)$, we get:
\begin{align*}
    F^\prime(t) =& \int_\Omega [\mathcal{L} u(x,t)] \varphi_{1}(x) dx + \lambda\int_\Omega (1-u(x,t))^{-p}\varphi_{1}(x) dx \\
    =& \int_\Omega u(x,t) [\mathcal{L} \varphi_{1}(x,t)] dx + \lambda\int_\Omega (1-u(x,t))^{-p}\varphi_{1}(x) dx \\
    =& -\mu_{1} F(t) + \lambda\int_\Omega (1-u(x,t))^{-p}\varphi_{1}(x) dx.
\end{align*}
Then, using Jensen's inequality, we get:
\begin{equation*}
    \lambda\int_\Omega (1-u(x,t))^{-p}\varphi_{1}(x) dx \geq \lambda \left(\int_\Omega (1-u(x,t))\varphi_{1}(x) dx \right)^{-p} \geq \lambda (1+pF(t)).
\end{equation*}
And finally, putting all together,
\begin{equation*}
    F^\prime(t) \geq \lambda + (-\mu_{1} + \lambda p) F(t) >0,
\end{equation*}
for any $t\in[0,T)$, where we have used the condition on the statement to see the positivity. Finally, integrating this inequality between $0$ and $T$:
\begin{equation*}
    T \leq \frac{1}{-\mu_{1}+\lambda p} \log \left( 1 + \left(\frac{-\mu_{1}}{\lambda}+p\right)\lim_{t\to T^-}F(t) \right).
\end{equation*}
The term in the right hand side is a finite positive quantity thanks to the fact that $0 \leq F(t) \leq 1$ for every $t\in[0,T)$. Then we conclude that the solution $u$ presents quenching in finite time.

Finally, take $\hat{u}$ as the solution of \eqref{1.1} with parameter $\lambda$ and initial datum $\hat{u}_0\geq 0$. Consider $\widehat{T}$ as its maximal existence time and define $T_0 = \min\{T,\widehat{T}\}$ Thanks to Lemma \ref{lema.comparacion}, we know that $\hat{u}(x,t) \geq u(x,t)$ for every  $(x,t)\in \Omega \times [0,T_0)$. Then we conclude that $\hat{u}$ also presents quenching in finite time under the condition $\mu_{1} < \lambda (1+p)$ and the result is proved.
\end{proof}

\begin{remark}
	Notice that the principal eigenvalue $\mu_{1}$ is decreasing with respect to the size of our domain $\Omega$. This together with Proposition \ref{prop.todoquenchea} means that the bigger $\Omega$ is, the more restrictive the existence of global solutions of problem \eqref{1.1} is, as one would naturally expect.
\end{remark}

Next, let us prove that for any appropriate $u_0$, we can find a small enough $\lambda$ such that the solution of \eqref{1.1} with initial datum $u_0$ is globally defined.

To do so, we first need to study the sign of the fractional laplacian of $(1-|x|^2)_+$. Our conclusions are relayed in the following lemma, and the corresponding proof is in Appendix A.

\begin{lemma} \label{lemma.fraclaplaciansign}
	Let $\chi(x) = (1-|x|^2)_+$, with $x\in\mathbb{R}^n$. Then there exists a unique $r>0$ such that 
	\begin{itemize}
		\item $-(-\Delta)^s \chi(x) < 0$ if $|x|<r$,
		\item $-(-\Delta)^s \chi(x) = 0$ if $|x|=r$,
		\item $-(-\Delta)^s \chi(x) > 0$ if $|x|>r$.
	\end{itemize}
\end{lemma}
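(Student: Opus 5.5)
The plan is to exploit radial symmetry and reduce the sign question to a one-variable function $g(\rho)$, where $\rho=|x|$, and then prove that $g$ is strictly increasing in $\rho$ with exactly one sign change.

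\textbf{Step 1: reduce to a radial profile.} Since $\chi$ is radial, so is $(-\Delta)^s\chi$. Write
\[
g(\rho) := -(-\Delta)^s\chi(x), \qquad |x|=\rho .
\]
By the representation
\[
-(-\Delta)^s\chi(x)=C_{n,s}\,\mathrm{P.V.}\int_{\mathbb{R}^n}\frac{\chi(y)-\chi(x)}{|x-y|^{n+2s}}\,dy,
\]
$g$ is continuous on $[0,\infty)$. Indeed, $\chi$ is Lipschitz and $C^{1,1}$ away from the sphere $|x|=1$, and near the sphere the Lipschitz bound already suffices when $s<1/2$. For $s\ge 1/2$ I would instead use the known explicit formula. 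Recall that
\[
(-\Delta)^s(1-|x|^2)_+^{s}=c \quad\text{in } B_1,
\]
and that the Riesz/Dyda formulas give
\[
(-\Delta)^s(1-|x|^2)_+^{\beta}
\]
for $\beta>-1$, inside $B_1$, as a hypergeometric function $c_{n,s,\beta}\,{}_2F_1\!\left(s+\tfrac n2,\,s-\beta;\,\tfrac n2;\,|x|^2\right)$. Outside $B_1$ there is an analogous closed form. With $\beta=1$ this gives an explicit expression for $g$ on $[0,1)$. Using Dyda's formula with $\beta=1$, $g(\rho)$ is a positive multiple of $-(1-\tfrac{n+2s}{n}\rho^2)$ up to normalization, namely
\[
-(-\Delta)^s\chi(x)=-c\left(1-\left(1+\tfrac{2s}{n}\right)|x|^2\right)\quad\text{for }|x|<1,
\]
with $c>0$. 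Indeed ${}_2F_1(a,s-1;n/2;z)$ with $s-1\in(-1,0)$ is not a polynomial in general, so I would check this carefully, and otherwise rely on the monotonicity argument in Step 2.

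\textbf{Step 2: signs and monotonicity.} At $\rho=0$ the origin is the strict global maximum of $\chi$, so the integrand $\chi(y)-\chi(0)$ is negative and $g(0)<0$. For $\rho\ge 1$ we have $\chi(x)=0\le\chi(y)$ with strict inequality on $B_1$, so $g(\rho)>0$. Hence a zero exists in $(0,1)$ by continuity.

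For uniqueness I would show that $g$ is strictly increasing on $[0,1]$. Write $\chi=(1-|x|^2)-(1-|x|^2)_-$. The polynomial part has zero fractional Laplacian in the distributional sense only after a tail correction, so it is cleaner to differentiate in the variable $\rho$ directly. For $x=\rho e_1$,
\[
\partial_\rho g = C_{n,s}\,\mathrm{P.V.}\int\frac{\partial_1\chi(y)-\partial_1\chi(x)}{|x-y|^{n+2s}}\,dy,
\]
by translation invariance, using $\chi(x+z)$ as a function of $x$. Here
\[
\partial_1\chi(y)=-2y_1\mathbf 1_{B_1}(y).
\]
This is $-2y_1$ minus a term supported outside $B_1$. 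The linear function $-2y_1$ contributes zero to the principal value (it has vanishing fractional Laplacian pointwise in the principal value sense, thanks to odd symmetry). What remains is
\[
C_{n,s}\int_{\mathbb{R}^n\setminus B_1}\frac{2y_1}{|x-y|^{n+2s}}\,dy ,
\]
which is positive for $x=\rho e_1$ with $\rho\in(0,1)$: pair $y$ with its reflection $(-y_1,y')$ and note that points with $y_1>0$ are closer to $x$. Strictly, $-2y_1$ is not in $L^1_{2s}$ when $2s\le1$, so I would carry out this splitting on truncated domains and justify the cancellation by odd symmetry of the kernel around $x$ on balls $B_R(x)$, sending $R\to\infty$.

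Together with $g>0$ on $[1,\infty)$, strict monotonicity on $[0,1]$ gives a unique zero $r\in(0,1)$ with the stated signs.

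\textbf{Main obstacle.} The expected main obstacle is rigorously justifying differentiation under the principal value and the cancellation of the linear part, since $\chi$ is only Lipschitz across $\partial B_1$ and the linear function lies outside the tail space for small $s$. I would handle this by computing $g(\rho_2)-g(\rho_1)$ as an integral of difference quotients on large balls before passing to limits.
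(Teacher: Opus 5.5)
Your argument is correct in its main line, and it takes a different route from the paper's.

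\textbf{Comparison with the paper.} The paper quotes Dyda's closed form $-(-\Delta)^s\chi(x)=-c\,{}_2F_1\bigl(\tfrac n2+s,s-1;\tfrac n2;|x|^2\bigr)$ with $c>0$, valid for $|x|<1$. It gets strict monotonicity from $\frac{d}{dz}{}_2F_1(a,b;c;z)=\frac{ab}{c}\,{}_2F_1(a+1,b+1;c+1;z)$, where the prefactor is negative and the new series has positive coefficients. It gets the sign change from the value $1$ at $z=0$ and the asymptotics of ${}_2F_1$ as $z\to1^-$ in the three regimes $s<\tfrac12$, $s=\tfrac12$, $s>\tfrac12$.

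You avoid special functions entirely. You get $g(0)<0$ and $g>0$ on $[1,\infty)$ from extremum considerations. You get $g'>0$ on $(0,1)$ by splitting $\partial_1\chi=-2y_1+2y_1\mathbf{1}_{B_1^c}$, cancelling the linear part by oddness about $x$, and signing the exterior integral by the reflection $y_1\mapsto -y_1$. This is the same kernel-reflection trick used in Lemma~\ref{lemma.oddeigenf} and Theorem~\ref{teo.movingplanes}, and your computation is right. The truncation you propose handles the lack of absolute convergence for $s\le\tfrac12$; switching from balls centred at $x$ to balls centred at $0$, needed for the pairing, costs only $O(R^{-2s})$.

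Your route is self-contained and uses only that the kernel is radial and decreasing. It also treats $|x|\ge1$ explicitly, which the paper leaves implicit since Dyda's formula is only valid in $B_1$. In exchange, the paper's route identifies $r$ as the root of an explicit function, which matters because $r$ enters $\lambda_0$ in Proposition~\ref{prop.solsglobales}.

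\textbf{Two repairs.}

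First, the polynomial identity in Step 1 is false. The expression $-c\bigl(1-(1+\tfrac{2s}{n})|x|^2\bigr)$ is, up to constants, $-(-\Delta)^s$ of $(1-|x|^2)_+^{1+s}$, where the second hypergeometric parameter is $-1$. For $\chi$ itself, the series $\sum_k\frac{(n/2+s)_k(s-1)_k}{(n/2)_k\,k!}z^k$ never terminates, and all coefficients with $k\ge1$ are negative. Drop that formula and rely on Step 2, as you suspected you might need to.

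Second, $g$ is not continuous at $\rho=1$ when $s\ge\tfrac12$. For $|x|=1$ you have $g(1)=C\int\chi(y)|x-y|^{-n-2s}\,dy=+\infty$, because $\chi$ vanishes only linearly at the sphere. So your intermediate value argument really needs $\lim_{\rho\to1^-}g(\rho)>0$; the limit exists by your monotonicity. This is exactly the point where the paper uses the endpoint asymptotics.

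A short fix is available. Since $\chi(y)+|y|^2=\max\{1,|y|^2\}$ is convex, and $0\le\chi\le1$, you get $\chi(x+z)+\chi(x-z)-2\chi(x)\ge-2\min\{|z|^2,1\}$ for all $x,z$. This gives an $x$-independent integrable lower bound for the symmetrized integrand. Fatou's lemma then yields $\liminf_{\rho\to1^-}g(\rho)\ge g(1)>0$ for every $s\in(0,1)$.

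With these two changes your proof is complete.
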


\begin{proof}[Proof of Proposition \ref{prop.solsglobales}]
    First, let us assume that
	\begin{equation} \label{eq_condition1}
		\lambda \leq \left( \frac{\delta}{2} \right)^{1+p} \frac{1}{C_{n,s} R^{2s}}.
	\end{equation}    
    In this case, consider the auxiliary problem
    \begin{equation}
    \left\{
    \begin{array}{ll}
        \displaystyle (-\Delta)^s \phi(x)= \lambda \left(\frac{\delta}{2}\right)^{-p}, & x\in B_R(x_0), \\
        \phi(x) = 1-\delta, & x\in\mathbb{R}^n \setminus B_R(x_0).
    \end{array}
    \right.
    \end{equation}
    It is well known that the solution to this problem is
    \begin{equation*}
        \phi(x) = (1-\delta) + C_{ns} \lambda \left(\frac{\delta}{2}\right)^{-p}(R^2-|x-x_0|^2)^s_+.
    \end{equation*}
    This function's maximum is attained at $x=x_0$, so
    \begin{equation*}
        \max_{x\in\mathbb{R}^n} \phi(x)=\phi(x_0) = (1-\delta) + C_{ns} \lambda \left(\frac{\delta}{2} \right)^{-p} R^{2s},
    \end{equation*}
    and using \eqref{eq_condition1}, we get that $\phi(x) \leq 1-\delta/2$ for every $x\in\Omega \subset B_R(x_0)$. What is more, since $s\in(0,1)$, we know that $\phi$ is a concave function in $B_R(x_0)$ and therefore $\Delta\phi(x) \leq 0$ for every $x\in B_R(x_0)$. Using these facts, it is easy to see that $\phi(x) \geq 1-\delta \geq 0$ for every $x\in \mathbb{R}^n$ and 
    \begin{equation*}
        -(-\Delta)^s \phi(x) + \Delta \phi(x) + \lambda (1-\phi(x))^{-p} \leq -\lambda \left(\frac{\delta}{2}\right)^{-p} + 0 + \lambda \left(\frac{\delta}{2}\right)^{-p} = 0
    \end{equation*}
    for every $x\in\Omega$. We conclude that $\phi$ is a time-independent supersolution of system \eqref{1.1} with initial datum $u_0$ and parameter $\lambda$. Then, $1-\delta/2 \geq\phi(x) \geq u(x,t)$ for every $(x,t) \in \Omega \times [0,T)$ according to Lemma \ref{lema.comparacion}.

    Next, assume that
	\begin{equation} \label{eq_condition2}
		\lambda \leq \left(\frac{\delta}{2}\right)^{1+p} \frac{2n r^2}{R^2},
	\end{equation}
    where $r$ is the one obtained in Lemma \ref{lemma.fraclaplaciansign}. For this case, consider the auxiliary problem
    \begin{equation}
    \left\{
    \begin{array}{ll}
        \displaystyle (-\Delta) \psi(x)= \lambda\left(\frac{\delta}{2}\right)^{-p}, 
        & x\in B_{R/r}(x_0),\\
        \psi(x) = 1-\delta, & x\in\mathbb{R}^n \setminus B_{R/r}(x_0).
    \end{array}
    \right.
    \end{equation}
    It is well known that the time-independent solution to this problem is
    \begin{equation*}
        \psi(x) = (1-\delta) + \frac{1}{2n} \lambda \left(\frac{\delta}{2}\right)^{-p}\left(\frac{R^2}{r^2}-|x-x_0|^2\right)_+.
    \end{equation*}
    This function's maximum is attained at $x=x_0$, so
    \begin{equation*}
        \max_{x\in\mathbb{R}^n} \psi(x)=\psi(x_0) = (1-\delta) + \frac{1}{2n} \lambda \left(\frac{\delta}{2} \right)^{-p} \frac{R^{2}}{r^2},
    \end{equation*}
    and using \eqref{eq_condition2}, we get that $\psi(x) \leq 1-\delta/2$ for every $x\in \mathbb{R}^n$. 
    
    We now want to prove that $-(-\Delta)^s\psi(x)\leq 0$ in $\Omega\subset B_R(x_0)$ to use a similar argument to the one we wrote for the first auxiliary system. Doing a simple translation and ignoring positive constants, proving $-(-\Delta)^s\psi(x)\leq 0$ in $B_{R}(x_0)$ is equivalent to proving
    \begin{equation*}
        -(-\Delta)^s \left(\frac{R^2}{r^2}-|x|^2 \right)_+ <0
    \end{equation*}
    in $B_{R}(0)$. Then we can use the linearity of the fractional laplacian to get
    \begin{equation*}
        -(-\Delta)^s \left(\frac{R^2}{r^2}-|x|^2 \right)_+ = -\frac{R^2}{r^2} (-\Delta)^s \left(1-\frac{|x|^2}{r^2/R^2} \right)_+.
    \end{equation*}
    Therefore, $-(-\Delta)^s\psi(x)<0$ in $B_R(x_0)$ if and only if $-(-\Delta)^s\chi(\frac{x}{r/R})<0$ in $B_R(0)$, that is, $-(-\Delta)^s\chi(x)<0$ in $B_r(0)$. Using Lemma \ref{lemma.fraclaplaciansign}, we conclude this is true.

    With these facts, it is easy to see that $\psi(x) \geq 1-\delta\geq 0$ for every $x\in\mathbb{R}^n$ and
    \begin{equation*}
         -(-\Delta)^s \psi(x) + \Delta \psi(x) + \lambda (1-\psi(x))^{-p} \leq 0 -\lambda \left(\frac{\delta}{2}\right)^{-p} + \lambda \left(\frac{\delta}{2}\right)^{-p} = 0
    \end{equation*}
    for every $x\in\Omega \subset B_R(x_0)$. Then $\psi$ is a time-independent supersolution of system \eqref{1.1} with initial datum $u_0$ and parameter $\lambda$. As before, according to Lemma \ref{lema.comparacion}, $1-\delta/2 \geq \psi(x) \geq u(x,t)$ for every $(x,t)\in\Omega\times[0,T).$
    
    Finally, notice that $\lambda_0=
    	\max\{\lambda_1,\lambda_2\}$, 
    where
	\[
		\lambda_1=\left(\frac\delta2\right)^{1+p}
		\frac{1}{C_{ns}	R^{2s}} 
		\qquad\text{and}\qquad 	
		\lambda_2
		=\left(\frac\delta2\right)^{1+p}
		\frac{2nr^2}{R^2}
	\]
	are precisely the bounds appearing 
	in \eqref{eq_condition1} and \eqref{eq_condition2}. 				
	Since 
	$\lambda\le\lambda_0=\max\{\lambda_1,\lambda_2\}$, 
	we have that $\lambda\le \lambda_1$ or 
	$\lambda\le \lambda_2$, namely whichever of the 
	two realizes the maximum. 
	In the first case, \eqref{eq_condition1} holds 
	and the argument above with $\phi$ gives 
	$u(x,t)\le 1-\delta/2$ for every 
	$(x,t)\in\Omega\times[0,T)$; 
	in the second case, \eqref{eq_condition2} holds and 
	the argument with $\psi$ gives the same bound. In 
	either case, $u$ never reaches the value $1$, and 
	therefore the solution $u$ of \eqref{1.1} 
	with initial 
	datum $u_0$ and parameter $\lambda$ is global.
\end{proof}

\begin{remark}
    Note that the upper bound
    \begin{equation*}
        \lambda_0 = \left(\frac{\delta}{2}\right)^{1+p} 
        \max \left\{ \frac{1}{C_{ns} R^{2s}}, 
        \frac{2n r^2}{R^2} \right\},
    \end{equation*}
    depends on the size of our domain $\Omega$. If $R$ is big enough, it is clear that 
    \[
    	\lambda_0 = \left(\frac{\delta}{2}\right)^{1+p} \frac{1}{C_{ns} R^{2s}} 
    \]and therefore the fractional laplacian is dominating over the existence of global solutions. However, if $R$ is small enough, then 
    \[
    	\lambda_0 = \left(\frac{\delta}{2}\right)^{1+p} \frac{2n r^2}{R^2}
    \] and the classical laplacian is the one governing our result.

    This behaviour makes sense from the nature of these operators: the classical laplacian encodes the short-range interactions of the system while the fractional laplacian does the same with the long-range ones.
    
    Finally, notice that this result also works 
    for the purely local and nonlocal cases. 
\end{remark}

Now we aim to prove that quenching happens away from the boundary in our problem, for which we will extensively use the parabolic moving planes from Section \ref{sec.movingplanes}. This result is very important in every quenching problem, for it prevents our operator from presenting an excessively pathological behaviour: quenching at the boundary would mean that $u(x_0,t)=0$ in some $x_0\in\partial\Omega$ and for every $t\in[0,T)$, while $u(x(t),t)\to 1$ as $t\to T$ for some trajectory $x(t)\to x_0$.

\begin{proof}[Proof of Theorem \ref{teo.quenchingset}]
Consider some $u_0$ such that the solution $u(x,t)$ of \eqref{1.1} with initial datum $u_0$ presents quenching in finite time $T$. Without loss of generality, assume that $u_0 \in C^{1,\beta}_{loc}(\Omega)$ and
\begin{equation} \label{cond.normalboundary}
\frac{\partial u_0}{\partial n}(x) < 0
\end{equation}
for every $x\in \Omega_\eta = \{x\in\Omega: \text{dist}(x,\mathbb{R}^n\setminus\Omega)<\eta\}$ with some small $\eta$; otherwise, we could take as initial datum $u(x,\tau)$ with some small $\tau>0$. This is possible thanks to the regularity from Proposition \ref{prop.existence} and Lemma \ref{lemma.boundarycontinuity} and the strict positivity from Corollary \ref{cor.positividad}. Take $x_0 \in \partial\Omega$, and assume without loss of generality $x_0 = 0$ with an outward normal $n = (1,0,\dots,0)$. Then take some $l \in (-\eta,0)$ and define
	\begin{equation*}
        \hat{x} = (2l-x_1,x^\prime), \;\;\; H_l = \{x=(x_1,x^\prime)\in\mathbb{R}^n : x_1 < l\}, \;\;\; \widehat{H}_l = \{x\in\mathbb{R}^n : \hat{x}\in H_l\},
    \end{equation*} 
    \begin{equation*}
        \Omega_l^+ = \Omega \cap \widehat{H}_l, \;\;\; \Omega_l^- = \{x \in \mathbb{R}^n : \hat{x} \in \Omega_l^-\}.
    \end{equation*}
    Now consider the function
    \begin{equation*}
        \Phi(x,t) = u(x_1,x^\prime,t)-u(2l-x_1,x^\prime,t),
    \end{equation*}
    with $(x,t) \in H_l \times (0,T)$. It is clear that this function satisfies that $\Phi(x,t) \geq 0$ for every $(x,t) \in (H_l \setminus \Omega_l^-) \times[0,T)$, $\Phi(x,t) = - \Phi(\hat{x},t)$ for every $(x,t) \in H_l \times[0,T)$ and $\Phi(x,0) > 0$ for every $x\in\Omega_l^-$ thanks to \eqref{cond.normalboundary}. Furthermore, it is also true that
    \begin{align*}
    	\Phi_t(x,t) =& \; u_t(x_1,x^\prime,t) - u_t(2l-x_1,x^\prime,t) \\
    	=& \; \mathcal{L} \Phi(x,t) + \lambda ((1-u(x_1,x^\prime,t)^{-p}) - (1-u(2l-x_1,x^\prime,t)^{-p})) \\
    	=& \;  \mathcal{L} \Phi(x,t) + \lambda p (1-\xi(x,t))^{-p-1} \Phi(x,t),
    \end{align*}
    for every $(x,t)\in\Omega_l^- \times[0,T)$, where $\xi(x,t)$ is a function that satisfies, for every $(x,t)\in\Omega_l^-\times[0,T)$:
    \begin{equation*}
    	\min\{u(x_1,x^\prime,t),u(2l-x_1,x^\prime,t)\} \leq \xi(x,t) \leq \max\{u(x_1,x^\prime,t),u(2l-x_1,x^\prime,t)\}.
    \end{equation*}
    Therefore, it is clear that, for every $(x,t)\in\Omega_l^-\times [0,T)$,
    \begin{equation*}
    	 \lambda p(1-\xi(x,t))^{-p} \leq \lambda p \left(1-\max_{\overline{\Omega}_l^-} \xi(\cdot,t)\right)^{-p-1} = C(t).
    \end{equation*}
   We know that $\max_{\overline{\Omega}_l^-} \xi(\cdot,t)<1$ for every $t\in[0,\tau] \subset [0,T)$, which implies that $C \in L^\infty_{loc}([0,T))$. We then conclude that, for some fixed $l_0<0$, $\Phi$ is under the assumptions of Theorem \ref{teo.movingplanes} and then $\Phi(x,t)>0$ in $\Omega_{l_0}^- \times [0,T)$. Then it is also clear that, if $x_1=l_0$:
	\begin{equation*}
		2 \frac{\partial u}{\partial x_1}(l_0,x^\prime,t) = \frac{\partial \Phi}{\partial x_1}(l_0,x^\prime,t) <0,
	\end{equation*}    
     for every $t\in[0,T)$. Since this reasoning could be repeated for any $l$ with $|l|<|l_0|$, we also deduce that 
    \begin{equation*}
     	\frac{\partial u}{\partial x_1} (x,t) < 0, \;\; \; \text{ in } \Omega_{l_0}^+ \times (0,T).
    \end{equation*}
     
    Next, let us consider a smoothing $\widehat{\Omega}_{l_0}$ of the boundary of $\Omega_{l_0} = \Omega_{l_0}^- \cup \Omega_{l_0}^+ \cup (\Omega \cap \{x: x_1 = l_0\})$ at the points $\{x: x_1=l_0\} \cap \partial\Omega$, so that $\widehat{\Omega}_{l_0}$ has a $C^{1,1}$ boundary all over and it is still symmetric with respect to the hyperplane $\{x:x_1=l_0\}$ (see Figure \ref{fig:smoothingdomain}). Abusing slightly our notation, let us write from now on $\widehat{\Omega}_{l_0}$ as $\Omega_{l_0}$ and both its halves as $\Omega_{l_0}^-$ and $\Omega_{l_0}^+$.
    
    \begin{figure}[ht]
\centering

\begin{tikzpicture}[scale=3]

\draw[dashed] (0,-1.5)--(0,1.5);
\node[black, scale=1.5] at (0.4, 1.6) {$\{x_1=l_0\}$};

\draw[very thick, domain=-1.4:1.4, samples=200]
plot({1-\x*\x}, {\x});

\draw[domain=-1:1, samples=200]
plot({\x*\x-1},{\x});

\draw[blue,very thick, domain= -0.36:0.36, samples=200]
plot(\x,{0.9125-625/720*\x*\x});

\draw[blue,very thick, domain= -0.36:0.36, samples=200]
plot(\x,{-0.9125+625/720*\x*\x});

\node[black, scale=2] at (-1,1.2) {$\Omega$};
\node[blue, scale=2] at (-0.4,0.3) {$\Omega_{l_0}^-$};
\node[blue, scale=2] at (0.4,0.3) {$\Omega_{l_0}^+$};

\end{tikzpicture}

\caption{A possible smoothing of $\Omega_{l_0}$}
\label{fig:smoothingdomain}

\end{figure}
    
    Then consider the following eigenvalue problem
    \begin{equation}
    \left\{
    \begin{array}{ll}
        -\mathcal{L} v(x) = \mu v(x), & x\in \Omega_{l_0},\\
        v(x)=0, & x\in \mathbb{R}^n \setminus \Omega_{l_0},\\
        v(x) = -v(\hat{x}), & x \in \Omega_{l_0}^-.
    \end{array}
    \right.
    \end{equation}
    
    Thanks to Lemma \ref{lemma.oddeigenf}, there exists a principal eigenvalue $\mu>0$ of this problem, with an associated eigenfunction $\varphi$ that can be chosen with $\varphi < 0$ in $\Omega_{l_0}^-$ and $\|\varphi\|_{L^2(\Omega_{l_0})} = 1$.
    
    With these ingredients, define now the function $\tilde{\Phi}(x,t) = \Phi(x,t) + \varepsilon e^{-\mu t}\varphi(x)$ for every $\mathbb{R}^n \times[0,T)$. It is clear that this function satisfies that $\tilde{\Phi}(x,t) \geq 0$ for every $(x,t) \in (H_{l_0} \setminus \Omega_{l_0}^-) \times[0,T)$ and $\tilde{\Phi}(x,t) = - \tilde{\Phi}(\hat{x},t)$ for every $(x,t) \in H_{l_0} \times[0,T)$. We also claim that there exists some small $\varepsilon>0$ such that $\tilde{\Phi}(x,0) > 0$ for every $x\in\Omega_{l_0}^-$.

Indeed, since we chose $l_0$ appropriately, it is clear that $\Phi(x,0)$ is bounded away from zero for every $x\in \Omega_{l_0}^- \cap \{x: x_1\leq l_0-\delta\}$ for some $\delta>0$. We also know that $\Phi(\cdot,0) \in C^{1,\beta^\prime}(\overline{\Omega})$, $\Phi(l_0,x^\prime,0)=0$ and $\partial_{x_1} \Phi(l_0,x^\prime,0)<0$ for every $(l_0,x^\prime) \in \overline{\Omega}_{l_0} \cap \{x: x_1=l_0\}$. Since the set $\overline{\Omega}_{l_0} \cap \{x: x_1=l_0\}$ is closed, we can take the minimum of $\partial_{x_1} \Phi$ so that
\begin{equation*}
	\frac{\partial \Phi}{\partial_{x_1}}(l_0,x^\prime,0) \leq \min_{x\in \overline{\Omega}_{l_0} \cap \{x: x_1=l_0\}} \frac{\partial \Phi}{\partial_{x_1}}(x,0) < 0,
\end{equation*}
for every $(l_0,x^\prime) \in \overline{\Omega}_{l_0}^- \cap \{x: x_1=l_0\}$. This together with the fact that $\varphi \in C^{1,\beta^\prime}(\overline{\Omega}_{l_0})$, tells us that we can choose a small enough $\varepsilon>0$ so that $\partial_{x_1} \Phi(l_0,x^\prime,0) < -\varepsilon \partial_{x_1} \varphi(l_0,x^\prime,0)$ for every $(l_0,x^\prime) \in \overline{\Omega}_{l_0} \cap \{x: x_1=l_0\}$, which proves the claim.
    
Finally, we can deduce that
    \begin{align*}
    	\tilde{\Phi}_t(x,t) &- \mathcal{L} \tilde{\Phi}(x,t) - \lambda p (1-\xi(x,t))^{-p-1} \tilde{\Phi}(x,t) \\
    	=& \; \varepsilon e^{-\mu t} (- \mu \varphi - \mathcal{L} \varphi  - \lambda p (1-\xi(x,t))^{-p-1} \varphi )(x) \\
    	=& \; -\lambda p \varepsilon e^{-\mu t} (1-\xi(x,t))^{-p-1} \varphi(x) > 0,
    \end{align*}
    for every $(x,t) \in \Omega_{l_0}^- \times [0,T)$. Therefore, we are again under the hypothesis of Theorem \ref{teo.movingplanes} and we can conclude that $\tilde{\Phi}(x,t) > 0$ for every 
    $(x,t) \in \Omega_{l_0}^- \times [0,T)$. This leads us to
    \begin{equation*}
    	u(x_1,x^\prime,t) > u(2l_0  -x_1,x^\prime,t) - \varepsilon e^{-\mu t} \varphi(x) > u(2l_0 -x_1,x^\prime,t) - \varepsilon e^{-\mu T} \varphi(x),
    \end{equation*}
    for every $(x,t) \in \Omega_{l_0}^- \times[0,T)$. Therefore, any point $x=(x_1,x^\prime) \in (l_0,0] \times \{x^\prime=0\}$ cannot be a quenching point. Notice that the $l_0$ chosen before is independent of the initial point $x_0 \in \partial\Omega$. We can then vary $x_0$ along the whole boundary to conclude that there exists a neighbourhood of the boundary in which there can be no quenching, that is, $Q(u) \subset K_{-l_0} = \{x\in\Omega: \text{dist}(x,\partial\Omega) \geq -l_0\}$.
\end{proof}

\begin{remark}
	Note that if we consider $\Omega = B_R(0)$ and radial initial data $u_0(r)$ such that $u_0^\prime(r)<0$ for every $r\in(0,R]$, then we can follow the moving plane argument as in Theorem \ref{teo.quenchingset} for any $l_0\in(0,R)$, which implies that $Q(u)=\{0\}$.
\end{remark}

\appendix

\section{Proof of Lemma \ref{lemma.fraclaplaciansign}}

    \begin{figure}[ht]
\centering

\begin{tikzpicture}[scale=2]

\draw[->, thick] (0,-2)--(0,2) node[above, scale=1] {$-(-\Delta)^s \chi (x)$};
\draw[->, thick] (0,0)--(5,0) node[right, scale=1] {$|x|$};

\draw[red, domain=0:5, samples=200]
plot({\x},{1.6*sin(deg(pi/5.5*\x-pi/2))});

\node[black, scale=1.3] at (2.75,-0.2) {$r$};
\draw[black] (2.75,0) -- ( 2.75,-0.07);

\end{tikzpicture}

\caption{Sketch of the behaviour of $\chi(|x|)=-(-\Delta)^s(1-|x|^2)_+$.}
\label{fig:fraclaplaciansign}

\end{figure}

According to \cite{D2012}, we know that
    \begin{equation*}
        -(-\Delta)^s \chi(x)= -\frac{2^{2s} \Gamma(\frac{n}{2}+s) B(-s,2)}{\Gamma(-s)\Gamma(\frac{n}{2})} {}_2F_1\left(\frac{n}{2}+s, s-1;\frac{n}{2};|x|^2 \right),
    \end{equation*}
    where $\Gamma, B$ and ${}_2F_1$ are the gamma function, the beta function and the Gauss hypergeometric function respectively. It is easy to check that the fraction in this expression is negative, so the sign of $-(-\Delta)^s \chi(x)$ is the opposite of that of the Gauss hypergeometric term. 
    
    First we note that
    \begin{equation*}
        {}_2F_1\left(\frac{n}{2}+s, s-1;\frac{n}{2};0 \right) = 1,
    \end{equation*}
    and
    \begin{equation*}
        \frac{\partial}{\partial |x|}  \Big[ {}_2F_1 \Big(\frac{n}{2}+s,  s-1;\frac{n}{2};|x|^2 \Big) \Big] = 2|x| \frac{(\frac{n}{2}+s)(s-1)}{(\frac{n}{2})} {}_2F_1\left(\frac{n}{2}+s+1, s;\frac{n}{2}+1;|x|^2 \right) < 0,
    \end{equation*}
    so that the hypergeometric term is strictly decreasing with $|x|$. Finally, using properties of the hypergeometric function, we know that its limit as $|x|\to 1^-$ is negative. Indeed, if $s<1/2$,
    \begin{equation*}
        \lim_{z\to1^-} {}_2F_1\left(\frac{n}{2}+s, s-1;\frac{n}{2};z \right) = \frac{\Gamma(\frac{n}{2})\Gamma(1-2s)}{\Gamma(\frac{n}{2}-s+1)\Gamma(-s)}<0.
    \end{equation*}
    If $s=1/2$,
    \begin{equation*}
        \lim_{z\to1^-} \frac{{}_2F_1\left(\frac{n}{2}+s, s-1;\frac{n}{2};z \right)}{-\ln(1-z)} = \frac{\Gamma(\frac{n}{2})}{ \Gamma(\frac{n+1}{2})\Gamma(-\frac{1}{2})}<0.
    \end{equation*}
    Finally, if $s>1/2$,
    \begin{equation*}
        \lim_{z\to1^-} \frac{{}_2F_1\left(\frac{n}{2}+s, s-1;\frac{n}{2};z \right)}{(1-z)^{1-2s}} = \frac{\Gamma(\frac{n}{2})\Gamma(2s-1)}{ \Gamma(\frac{n}{2}+s)\Gamma(s-1)}<0.
    \end{equation*}
    Putting all together, the hypergeometric term has a unique root we will denote by $r\in(0,1)$, and the result is proved. \qed
    
    \section*{Acknowledgements}

	Both authors were partially supported by 
	ANII (Agencia Nacional de Investigaci\'on 
	e Innovaci\'on, Uruguay) 
	grant FCE\_3\_2024\_1\_181302.
	
	S. Junquera was also partially supported by grant PID2022-137074NBI00 funded by \break
MCIU/AEI/10.13039/501100011033, Spain and by the FPU21/05580 grant from the Ministry
of Science, Innovation and Universities.
	
	L.\,M.\ Del Pezzo 
	was also partially supported 
	by grant PID2023-148028NB-I00, 
	funded by \break MICIU/
	AEI/10.13039/501100011033 and by ERDF/EU.

\bibliographystyle{abbrvdoi}
\bibliography{mixed_operator_bibliography}

\end{document}